\theoremstyle{plain}
\newtheorem{theorem}{Theorem}[section]
\newtheorem{corollary}[theorem]{Corollary}
\newtheorem{proposition}[theorem]{Proposition}
\newtheorem{lemma}[theorem]{Lemma}
\theoremstyle{definition}
\newtheorem{definition}[theorem]{Definition}
\theoremstyle{remark}
\newtheorem{remark}[theorem]{Remark}
\numberwithin{equation}{section}
\newcommand{\R}{\mathbb{R}}
\renewcommand{\S}{\mathcal{S}}
\newcommand{\indexset}{\mathcal{I}}
\newcommand{\V}{V}
\newcommand{\set}[1]{\{#1\}}
\newcommand{\norm}[1]{\|#1\|}
\newcommand{\indicator}{\chi}
\newcommand{\lip}{\mathrm{Lip}}
\newcommand{\loc}{\mathrm{loc}}
\newcommand{\eps}{\varepsilon}
\newcommand{\source}{h}
\newcommand{\dx}{\Delta x}
\DeclareMathOperator{\sgn}{sgn}
\DeclareMathOperator*{\esssup}{ess\, sup}
\DeclareMathOperator{\BV}{BV}
\DeclareMathOperator*{\supp}{supp}
\title[A semidiscrete approximation of scalar conservation laws]{Convergence rate for a semidiscrete approximation of scalar conservation laws}
\author{Magnus C. Ørke}
\email{magnusco@math.uio.no}
\date{\today}
\begin{document}

\begin{abstract}
    We propose a semidiscrete scheme for approximation of entropy solutions of one-dimensional scalar conservation laws with nonnegative initial data. The scheme is based on the concept of particle paths for conservation laws and can be interpreted as a finite-particle discretization. A convergence rate of order $\nicefrac{1}{2}$ with respect to initial particle spacing is proved. As a special case, this covers the convergence of the Follow-the-Leader model to the Lighthill--Whitham--Richards model for traffic flow.
\end{abstract}

\maketitle

\section{Introduction}

We consider the one-dimensional, scalar conservation law
\begin{equation} \label{eq:conservation_law}
    \left\{
    \begin{aligned}
        & \partial_t u + \partial_x f(u) = 0 \\
        & u(0) = u_0,
    \end{aligned}
    \right.
\end{equation}
where the initial data is a nonnegative function $u_0 \in \BV \cap L^1(\R)$ and the flux $f$ is assumed to be Lipschitz with $f(0) = 0$ and a well-defined derivative $f'(0)$ at zero.

The present work builds on the equivalence between unique particle paths and entropy solutions for conservation laws established in \cite{fjordholm_maehlen_oerke}. This result is based on interpreting the conservation law~\eqref{eq:conservation_law} as a continuity equation
\begin{equation}\label{eq:cl_as_cont_eq}
    \partial_t u + \partial_x \bigl(a(u) u\bigr) = 0, \quad \text{where} \quad a(u) \coloneqq f(u) / u,
\end{equation}
leading to particle paths
\begin{equation} \label{eq:exact_flow}
    \left\{
        \begin{aligned}
            & \dot{x}_t = \frac{f(u(x_t, t))}{u(x_t, t)}\\
            & x_0 = x.
        \end{aligned}
    \right.
\end{equation}
In particular, it states that if $u$ is the entropy solution of \eqref{eq:conservation_law}, then the ODE \eqref{eq:exact_flow} has a unique solution $t \mapsto x_t$ for all initial conditions $x \in \R$. (Here, $\dot{x} = \frac{d}{dt} x$, and $x_t$ denotes $x$ evaluated at time $t$). Moreover, the collection of solutions forms a continuous forward flow $(x, t) \mapsto X_t(x)$, and the entropy solution satisfies the pushforward formula
\begin{equation} \label{eq:entropy_sol_representation}
    u(t) = (X(t))_\# u_0, \quad \text{i.e.}\quad \int_\R \vartheta(x) u(x, t)\,dx = \int_\R \vartheta(X_t(x)) u_0(x)\,dx
\end{equation}
for all $\vartheta \in C_c(\R)$.

We will formulate a novel finite-particle approximation of entropy solutions of the conservation law \eqref{eq:conservation_law}, centered around discretized versions of the particle path ODE \eqref{eq:exact_flow} and the pushforward representation~\eqref{eq:entropy_sol_representation}.

For concave flux functions $f$, the proposed scheme coincides exactly with the well-known Follow-the-Leader (FtL) model for traffic flow. This connection provides a new theoretical underpinning for the FtL model, and enables us to systematically analyze its rate of convergence to the macroscopic Lighthill--Whitham--Richards (LWR) model. To our knowledge, a convergence proof of this nature for the FtL model has been previously lacking.

A simplified version of the scheme, hereafter referred to as the \textit{particle path scheme}, is as follows. A complete detailed specification will be given in Section~\ref{sec:scheme}.

\begin{enumerate} [label=\textbf{Step \arabic*.}, ref=Step \arabic*]
    \item \label{item:1} \textit{Approximation of initial data}. Initialize a collection of $N$ \textit{particles} $x_0^1 < x_0^2 < \cdots < x_0^N$, and approximate the initial data $u_0$ by a function $v_0$ which is piecewise constant with \textit{local densities}
    \begin{equation*}
        v_0^i = \frac{1}{x_0^{i+1} - x_0^i} \int_{x_0^i}^{x_0^{i+1}} u_0(x)\, dx
    \end{equation*}
    on $(x_0^i, x_0^{i+1})$.
    \vspace{0.05cm}

    \item \label{item:2} \textit{Particle dynamics}. Let each particle $t \mapsto x_t^i$ evolve according to the ODE ${\dot{x}_t^i = V(v_t^{i-1}, v_t^i)}$, where $V$ is a velocity function given by
    \begin{equation} \label{eq:velocity}
        \V(v_l, v_r) \coloneqq
        \left\{
        \begin{aligned}
            & \min_{v\in[v_l, v_r]} a(v) \qquad \text{if}\quad 0 \leq v_l \leq v_r\\
            & \max_{v\in[v_r, v_l]} a(v) \qquad \text{if}\quad 0 \leq v_r \leq v_l.
        \end{aligned}
        \right.
    \end{equation}
    Simultaneously update local densities $t \mapsto v_t^i$ as
    \begin{equation*} \label{eq:local_densities}
        v_t^{i} = \frac{x_0^{i+1} - x_0^i}{x_t^{i+1} - x_t^i} v_0^i.
    \end{equation*}
    Define an approximate solution $v$ from local densities as
    \begin{equation} \label{eq:approx_sol}
        v(x, t) \coloneqq \sum_{i = 0}^{N} v_t^i \indicator_{(x_t^i, x_t^{i+1})}(x),
    \end{equation}
    where $\indicator_{(x^i, x^{i+1})}$ is an indicator function on the interval $(x^i, x^{i+1})$.
    \vspace{0.05cm}

    \item \label{item:3} \textit{Resolution of collisions}. If two or more particles collide, delete the leftmost particles and local densities in the collision, and continue the system. Repeat~\mbox{\ref{item:2}}--\mbox{\ref{item:3}} iteratively up to time $T$.
\end{enumerate}

\subsection{Main results} \label{subsec:main_results}

Rather than analyzing~\mbox{\ref{item:1}}--\mbox{\ref{item:3}}~directly, we show that the function $v$ defined in \eqref{eq:approx_sol} satisfies a continuity equation
\begin{equation} \label{eq:cont_eq}
    \left\{
    \begin{aligned}
        & \partial_t v + \partial_x(A v) = 0 \\
        & v(0) = v_0,
    \end{aligned}
    \right.
\end{equation}
where $A$ is the piecewise linear interpolation of particle velocities with interpolation nodes given by particle positions. That is, $A$ is (up to the first collision, cf.~\eqref{eq:velocity_interpolation_global}) defined as
\begin{equation} \label{eq:velocity_interpolation}
    A(x, t) = \frac{x_t^{i+1}-x}{x_t^{i+1}-x_t^i} V(v_t^{i-1}, v_t^{i}) + \frac{x-x_t^i}{x_t^{i+1}-x_t^i} V(v_t^{i}, v_t^{i+1})
\end{equation}
for $x \in (x_t^i, x_t^{i+1})$. This formulation makes it possible to derive an accurate relationship between the approximation $v$ and the entropy solution of the conservation law~\eqref{eq:conservation_law}. In particular, we prove that $v$ satisfies the entropy-like inequality
\begin{equation} \label{eq:strong_approx_entropy_ineq}
    \partial_t |v - k| + \partial_x \bigl((A v - f(k)) \sgn(v-k)\bigr) \leq 0
\end{equation}
for all nonnegative constants $k \in \R$. As a consequence, we obtain the following theorem.

\begin{theorem}[Main Theorem] \label{thm:main}
    Let $u_0$ be a nonnegative function in $\BV \cap L^1(\R)$ and assume that $f$ is Lipschitz. Then the particle path scheme generates a unique approximation $v$ which satisfies
    \begin{equation} \label{eq:main_stability_result}
        \norm{v(T) - u(T)}_{L^1(\R)} \leq \norm{v_0 - u_0}_{L^1(\R)} + 2 \sqrt{2 |u_0|_{\BV(\R)} \norm{A v - f(v)}_{L^1(\R \times (0, T))}},
    \end{equation}
    where $u$ is the entropy solution of the conservation law \eqref{eq:conservation_law}.
\end{theorem}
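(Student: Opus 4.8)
The plan is to run a Kuznetsov-type \emph{doubling-of-variables} comparison between the entropy solution $u$ of \eqref{eq:conservation_law} and the scheme output $v$, exploiting the fact that $v$'s only deviation from a genuine entropy solution is the flux mismatch $Av-f(v)$. The ingredients are: $u$ obeys the Kruzhkov inequality $\partial_t|u-k|+\partial_x((f(u)-f(k))\sgn(u-k))\le0$ for all $k$, while $v$ obeys \eqref{eq:strong_approx_entropy_ineq} for all nonnegative $k$; since $u_0\ge0$, the comparison principle gives $u\ge0$ and the transport construction gives $v\ge0$, so substituting a nonnegative constant into \eqref{eq:strong_approx_entropy_ineq} is legitimate. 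Uniqueness of $v$ is inherited from the well-posedness of the particle ODEs and the deterministic collision rule of Section~\ref{sec:scheme}, so the real content is the estimate \eqref{eq:main_stability_result}.

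I would write Kruzhkov's inequality for $u$ in $(x,t)$ with $k=v(y,s)$ and \eqref{eq:strong_approx_entropy_ineq} for $v$ in $(y,s)$ with $k=u(x,t)$, and test both against $\phi(x,t,y,s)=\omega_\epsilon(x-y)\,\bar\omega_{\epsilon_0}(t-s)\,\psi(t)$, where $\omega_\epsilon,\bar\omega_{\epsilon_0}$ are mollifiers at spatial scale $\epsilon$ and temporal scale $\epsilon_0$ and $\psi$ approximates the indicator of $(0,T)$. Adding the two inequalities, the temporal-derivative contributions cancel pairwise in their $\bar\omega_{\epsilon_0}'$ parts, leaving only the $\psi'$ term; passing $\epsilon_0\to0$ (justified by strong $L^1$-in-time continuity of both $u$ and $v$) and letting $\psi'$ localize to the endpoints produces $\norm{u_0-v_0}_{L^1}-\norm{u(T)-v(T)}_{L^1}$, up to a spatial regularization error. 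Anchoring the double integral at the $v$-variable and using the reverse triangle inequality charges that error to $u$ alone, as $C_1\epsilon\,|u(t)|_{\BV}$, and the total-variation-diminishing property of the entropy solution lets me replace $|u(t)|_{\BV}$ by $|u_0|_{\BV}$.

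The heart of the matter is the flux term. Using $\partial_y\omega_\epsilon(x-y)=-\partial_x\omega_\epsilon(x-y)$ and $\sgn(v-u)=-\sgn(u-v)$, the two entropy fluxes combine: the shared values $f(u)$ and $f(k)$ cancel and one is left with exactly $-(Av-f(v))\sgn(u-v)\,\omega_\epsilon'(x-y)\,\psi\,\bar\omega_{\epsilon_0}$. Estimating $|\sgn|\le1$ and integrating out the mollifiers through $\int|\omega_\epsilon'|\,dx=C_2/\epsilon$ and $\int\bar\omega_{\epsilon_0}\,dt=1$ bounds this defect by $\tfrac{C_2}{\epsilon}\norm{Av-f(v)}_{L^1(\R\times(0,T))}$, crucially \emph{without} the variation of $u$. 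Collecting everything gives
\begin{equation*}
    \norm{v(T)-u(T)}_{L^1}\le\norm{v_0-u_0}_{L^1}+C_1\,\epsilon\,|u_0|_{\BV}+\frac{C_2}{\epsilon}\norm{Av-f(v)}_{L^1(\R\times(0,T))},
\end{equation*}
and minimizing the right-hand side over $\epsilon>0$ yields $2\sqrt{C_1C_2\,|u_0|_{\BV}\norm{Av-f(v)}_{L^1}}$; carrying the explicit mollifier constants through reproduces the coefficient $2\sqrt2$ of \eqref{eq:main_stability_result}.

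The step I expect to be most delicate is twofold. First, the clean collapse of the two entropy fluxes to precisely the mismatch $Av-f(v)$ with an $O(1/\epsilon)$ (rather than $O(1/\epsilon)\cdot|u|_{\BV}$) coefficient must be verified carefully, since it is exactly this that turns the $\epsilon$-versus-$1/\epsilon$ trade-off into a square-root rate rather than a worse power. Second, the limit $\epsilon_0\to0$ and the endpoint localization of $\psi'$ require the entropy inequalities to hold as measures together with strong $L^1$-time continuity, and extracting the stated coefficient $2\sqrt2$ demands that the regularization constants be tracked explicitly through the mollifier estimates. The remaining bookkeeping—the reverse-triangle charging to $|u_0|_{\BV}$ and the TVD replacement—is routine.
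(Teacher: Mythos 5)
Your proposal is correct and takes essentially the same route as the paper: the paper combines the approximate entropy inequality \eqref{eq:weak_approx_entropy_ineq} with Kuznetsov's lemma (cited as a black box from Holden--Risebro) and the temporal $L^1$-continuity of $v$ from Lemma~\ref{lemma:temporal_modulus}, bounds the flux defect by $\tfrac{1}{\eps}\norm{Av-f(v)}_{L^1(\R\times(0,T))}$ with no factor of $|u|_{\BV}$, and optimizes over $\eps$ exactly as you do. The only difference is that you unpack the doubling-of-variables argument inline rather than citing the lemma, and the ingredients you flag as delicate --- the clean cancellation leaving only $(Av-f(v))\sgn(u-v)$, the nonnegativity of $u$ needed to insert $k=u(x,t)$ into \eqref{eq:strong_approx_entropy_ineq}, and the time-continuity needed for the endpoint localization --- are precisely the ones the paper supplies.
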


One should note that the function $A$ in the stability estimate \eqref{eq:main_stability_result} is the global-in-time version of~\eqref{eq:velocity_interpolation}, defined in \eqref{eq:velocity_interpolation_global} (see also Section~\ref{subsec:collision_resolution}). It follows the same principle but is more involved to write down due to particle collisions.

Theorem~\ref{thm:main} is a general result, imposing no assumptions on the distribution of initial particles. As the following corollary exemplifies, explicit convergence rates can be derived under additional conditions.

\begin{corollary} \label{cor:main}
    In addition to the assumptions of Theorem~\ref{thm:main}, assume that $f'$ is Lipschitz and that $u_0$ has compact support. Further, assume that $\supp(u_0) \subset [x_0^1, x_0^N]$ and $x_0^{i+1} - x_0^{i} \leq \dx^*$ for all $i \in \set{1, \dots, N-1}$. Then the approximation error is bounded by
    \begin{equation*}
        \norm{v(T) - u(T)}_{L^1(\R)} \leq |u_0|_{\BV(\R)} \biggl(\dx^* + 2\sqrt{T [f']_\lip \norm{u_0}_{L^\infty(\R)} \dx^*}\biggr).
    \end{equation*}
    That is, the scheme is of order $\nicefrac{1}{2}$ with respect to initial distance between particles.
\end{corollary}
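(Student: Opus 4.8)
The plan is to deduce the corollary from Theorem~\ref{thm:main} by estimating separately the two terms on the right-hand side of~\eqref{eq:main_stability_result}: the initial error $\norm{v_0 - u_0}_{L^1(\R)}$ and the consistency error $\norm{A v - f(v)}_{L^1(\R\times(0,T))}$. For the initial error, recall that $v_0$ is the cell average of $u_0$ on each $I_i \coloneqq (x_0^i, x_0^{i+1})$. Since $|u_0(x) - u_0(y)| \le |u_0|_{\BV(I_i)}$ for $x,y \in I_i$, averaging in $y$ gives $\int_{I_i} |u_0 - v_0^i|\,dx \le \Delta x_i\,|u_0|_{\BV(I_i)}$ with $\Delta x_i \coloneqq x_0^{i+1}-x_0^i$. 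Because $\supp(u_0) \subset [x_0^1, x_0^N]$, both $u_0$ and $v_0$ vanish outside this interval, so summing over $i$, using $\Delta x_i \le \dx^*$ together with $\sum_i |u_0|_{\BV(I_i)} \le |u_0|_{\BV(\R)}$ (variations over disjoint intervals sum to at most the total variation), yields $\norm{v_0 - u_0}_{L^1(\R)} \le \dx^*\,|u_0|_{\BV(\R)}$, which is the first term of the claimed bound.

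The core of the argument is the consistency error. On a cell $(x_t^i, x_t^{i+1})$ the approximation is constant, $v = v_t^i$, and since $f(v_t^i) = a(v_t^i)\,v_t^i$ we have $A v - f(v) = v_t^i\bigl(A - a(v_t^i)\bigr)$; as $A$ is the affine interpolant between the nodal velocities $\V(v_t^{i-1},v_t^i)$ and $\V(v_t^i,v_t^{i+1})$, this integrand is affine in $x$ on the cell. To control the nodal errors I would first show that $a$ is Lipschitz: writing $a(v) = \tfrac1v\int_0^v f'(s)\,ds = \int_0^1 f'(sv)\,ds$ and using that $f'$ is Lipschitz gives \[ |a(v) - a(w)| \le \tfrac12 [f']_\lip\,|v - w|. \] Since $a$ is continuous on $[0,\infty)$ and $\V(v_l,v_r)$ is, by~\eqref{eq:velocity}, an extremum of $a$ over the interval with endpoints $v_l, v_r$, one has $\V(v_l,v_r) = a(\xi)$ for some intermediate $\xi$, whence $|\V(v_t^{i-1},v_t^i) - a(v_t^i)| \le \tfrac12 [f']_\lip |v_t^{i-1}-v_t^i|$, and similarly at the right node.

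I would then integrate the affine integrand exactly. Writing $c_i, d_i$ for the two nodal errors, the $L^1$ norm of an affine function over a cell equals the cell length times $\tfrac{|c_i|+|d_i|}{2}$, so $\int_{x_t^i}^{x_t^{i+1}} |A v - f(v)|\,dx \le m_i\,\tfrac{|c_i|+|d_i|}{2}$, where $m_i = v_t^i(x_t^{i+1}-x_t^i) = v_0^i\,\Delta x_i$ is the conserved cell mass. Bounding $m_i \le \norm{u_0}_{L^\infty(\R)}\dx^*$, inserting the nodal estimates, and summing over $i$, the two sums $\sum_i |v_t^{i-1}-v_t^i|$ and $\sum_i |v_t^i - v_t^{i+1}|$ each equal $\TV(v(t))$, giving $\int_\R |A v - f(v)|\,dx \le \tfrac12 [f']_\lip \norm{u_0}_{L^\infty(\R)}\dx^*\,\TV(v(t))$. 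Integrating in time and using $\TV(v(t)) \le \TV(v_0) \le |u_0|_{\BV(\R)}$ — the first inequality being the scheme's total-variation-diminishing property and the second because cell averaging does not increase the variation — produces $\norm{A v - f(v)}_{L^1(\R\times(0,T))} \le \tfrac12 T [f']_\lip \norm{u_0}_{L^\infty(\R)}\dx^*\,|u_0|_{\BV(\R)}$. Substituting both estimates into~\eqref{eq:main_stability_result} gives the stated bound, the factor $\tfrac12$ cancelling the $2$ under the square root.

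I expect two points to require the most care. First, the sharp constant: one must use the exact $L^1$ norm of the affine interpolant (the average $\tfrac{|c_i|+|d_i|}{2}$ rather than the crude maximum) and the sharp Lipschitz constant $\tfrac12[f']_\lip$ of $a$; replacing either by a cruder bound costs a factor $\sqrt2$ and loses the clean constant. Second, the cell-wise estimate and the variation bound must be shown to persist through collisions: the mass $m_i$ is conserved between collisions, the flux interpolant retains its affine-per-cell form via the global definition~\eqref{eq:velocity_interpolation_global}, and the TV-diminishing property absorbs the deletions in~\ref{item:3}, so the argument carries over to the global-in-time approximation $v$.
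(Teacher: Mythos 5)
Your proposal is correct and follows essentially the same route as the paper's proof (via its Proposition~\ref{prop:explicit_convergence_rate}): bound the two terms of \eqref{eq:main_stability_result} separately, using the cell-average estimate for $\norm{v_0-u_0}_{L^1}$, and for the consistency term use that each nodal velocity equals $a$ at an intermediate value, $[a]_\lip \leq [f']_\lip/2$, the conserved cell mass $v_t^i \dx_t^i = v_0^i \dx_0^i$, and the TV-diminishing property. The only (immaterial) difference is bookkeeping of a factor $2$: the paper bounds the integrand on each cell by the sum of the two nodal errors and then recovers the factor via a sharper total-variation bound using the refined sequence $\dots, v^{i-1}, \tilde v^i, v^i, \dots$, whereas you integrate the affine integrand exactly (average of nodal errors) and accept the cruder bound of the two jump sums by $2\TV(v(t))$ --- both yield the identical constant.
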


\subsection{Illustrative example} \label{subsec:example}

To illustrate the nature of the approximation, we consider the following problem for Burgers' equation:
\begin{equation*} \label{eq:example_prob}
    \partial_t u + \partial_x \biggl(\frac{1}{2}u^2\biggr) = 0, \qquad u_0(x) = 
    \begin{cases}
        3 & \text{for } 0 < x < 1 \\
        1 & \text{else}.
    \end{cases}
\end{equation*}
The unique entropy solution is given by
\begin{equation} \label{eq:entropy_sol_example}
    u(x, t) = 
    \begin{cases}
        \frac{x}{t} &  \text{if } t < x < 3t \\
        3 & \text{if } 3t < x < 1 + 2t \\
        1 & \text{else},
    \end{cases}
\end{equation}
which features both a rarefaction region and a shock. Figure~\ref{fig:approximation_example} compares the approximation~$v$ generated by the particle path scheme with the entropy solution for two different spacings of initial particles. The underlying idea of the approximation becomes evident in the particle plots to the right: Particles $(x_t^i)_{i = 1}^N$ divide the mass of $u$ into $N-1$ parts $(v_t^i)_{i = 1}^{N-1}$. The mass between adjacent particles remains constant over time and is continually reaveraged, making $v$ a piecewise constant function. See \cite[Example 6.1]{fjordholm_maehlen_oerke} for the exact particle paths of the entropy solution in this example.

\begin{figure}
    \centering
    \begin{subfigure}[t]{\textwidth}
        \centering
        \includegraphics[width=0.45\textwidth]{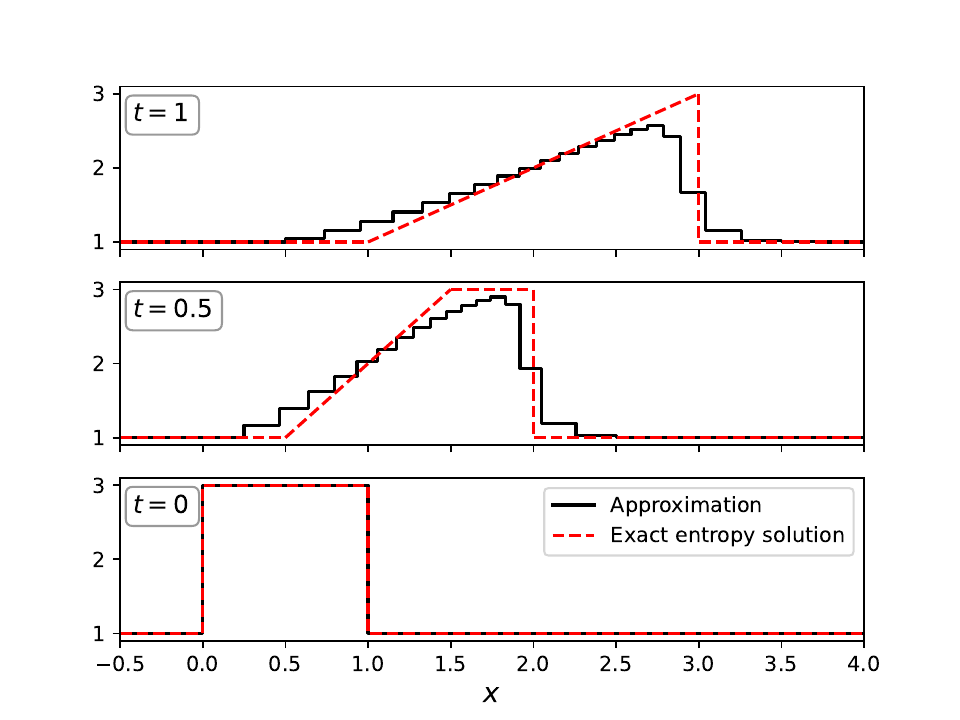}
        \includegraphics[width=0.45\textwidth]{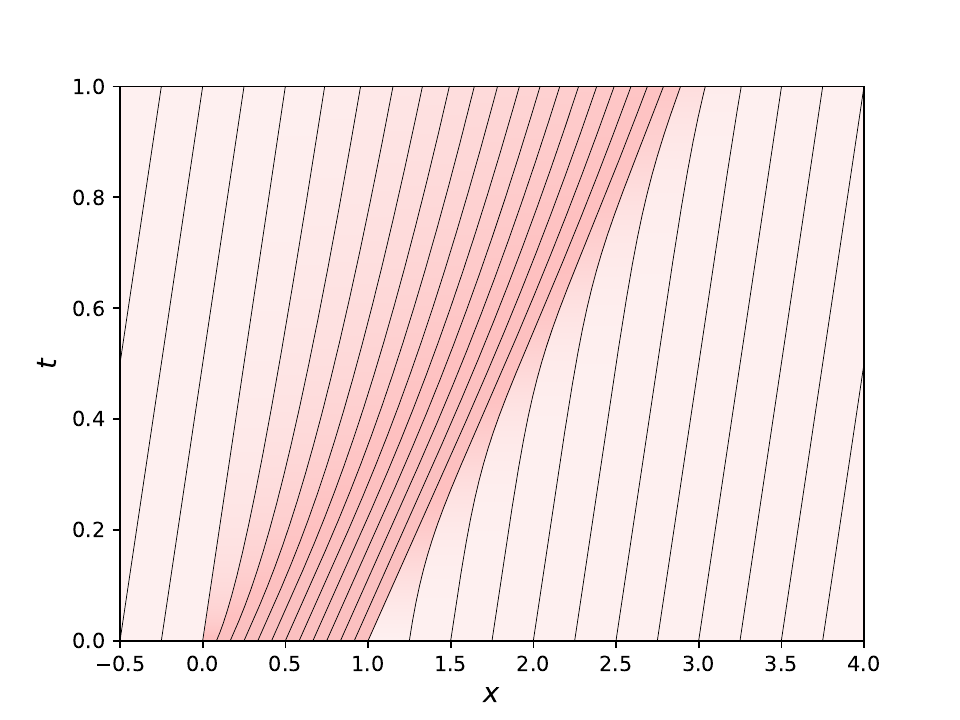}
        \caption{Coarse approximation.} \label{fig:coarse_approximation}
    \end{subfigure}

    \begin{subfigure}[t]{\textwidth}
        \centering
        \includegraphics[width=0.45\textwidth]{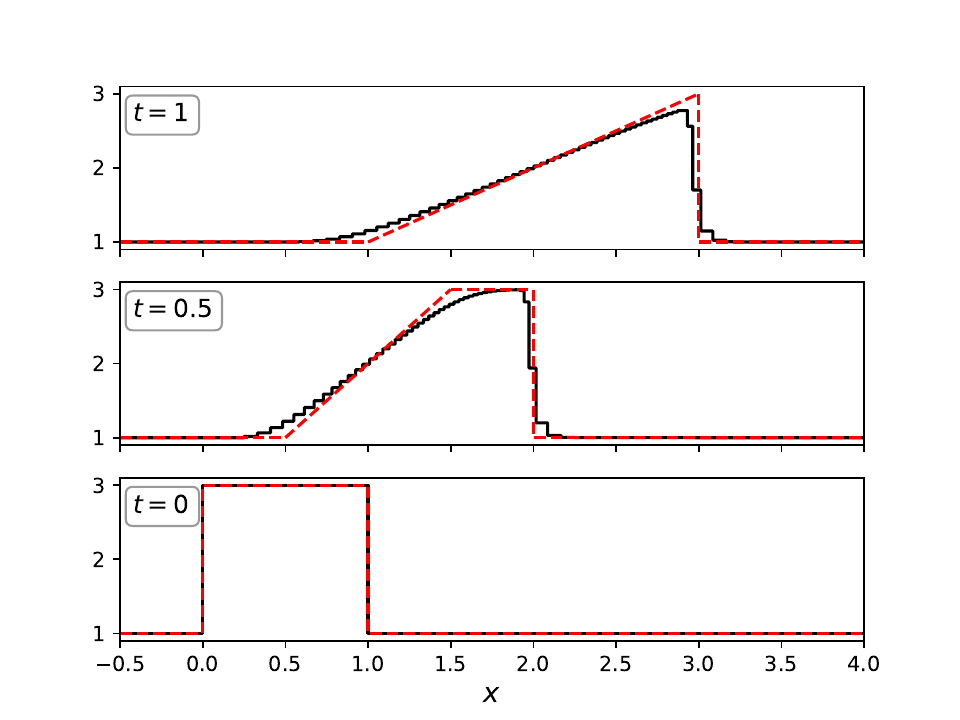}
        \includegraphics[width=0.45\textwidth]{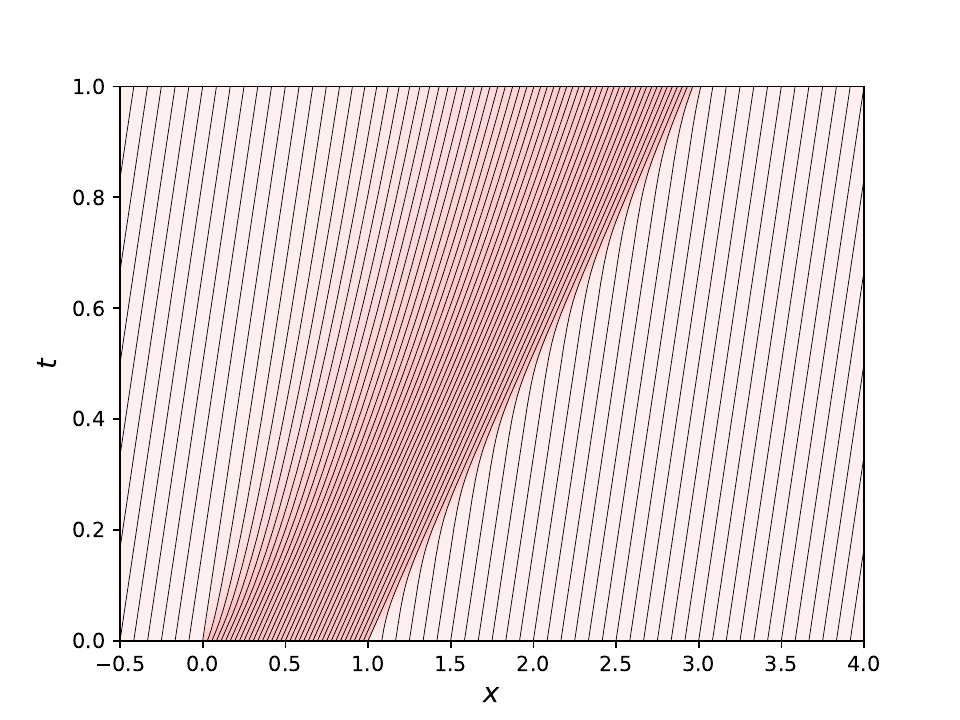}
        \caption{Finer approximation.} \label{fig:fine_approximation}
    \end{subfigure}
    \caption{Approximations of \eqref{eq:entropy_sol_example} using different numbers of initial particles. The particle paths were computed with the forward Euler method using a small time step. In the particle path plots (right), the background shading represents the magnitude of the approximate solution.}
    \label{fig:approximation_example}
\end{figure}

\subsection{Comparison to front tracking} \label{subsec:front_tracking}

The difference between our approach and methods like front tracking can be understood through the interpretation of the conservation law \eqref{eq:conservation_law}. Standard methods often rely on the quasi-linear form
\begin{equation*}
    \partial_t u + f'(u) \partial_x u = 0.
\end{equation*}
In this view, approximating the characteristic speed $f'(u)$ by a piecewise constant function (which corresponds to approximating the flux $f(u)$ by a piecewise linear function) effectively assigns a specific velocity to the values of the state variable $u$. The dynamics are then governed by rules for how these constant states interact, such as rarefaction waves and the Rankine--Hugoniot jump conditions which are used in front tracking to construct approximate entropy solutions.

In contrast, the particle path scheme stems from interpreting \eqref{eq:conservation_law} as a continuity equation \eqref{eq:cl_as_cont_eq} with a density-dependent velocity field $a(u) = f(u)/u$. We then approximate the velocity field $a(u(x,t))$, viewed as a function of the spatial variable $x$, by a piecewise constant function. This assigns a specific velocity, and thus a rate of compression or expansion, to spatial intervals between particles. A fundamental difference between our approach and front tracking is therefore that front tracking approximates the flux $f$ as a function of the state variable $u$, while our method approximates the velocity field $a(u(x,t))$ as a function of the spatial variable $x$. 

Since $u$ is generally discontinuous, the velocity field $a(u)$ will also have discontinuities. This necessitates a careful choice for the velocity at these spatial jumps. As we will demonstrate, the specific choice leading to the entropy solution is the velocity defined in \eqref{eq:velocity}.

\subsection{Application to traffic modelling} \label{subsec:applications}

In the particular case of concave flux $f$, the proposed scheme coincides with the Follow-the-Leader (FtL) (see e.g. \cite{holden_risebro_2018}) model for traffic flow and establishes its convergence rate to the Lighthill--Whitham--Richards (LWR) \cite{lighthill_whitham_1955,richards_1956} model. This convergence has been studied by several others \cite{aw_klar_rascle_materne, colombo_rossi, francesco_rosini_2015, goatin_rossi_2017, holden_risebro_2018_short, holden_risebro_2018}, but to our knowledge, an explicit convergence rate like \eqref{eq:main_stability_result} has not previously been available.

The LWR model describes average vehicular density $v$ on a macroscopic level through the conservation law
\begin{equation*}
    \partial_t u + \partial_x(a(u) u) = 0,
\end{equation*}
where $a$ is a nonincreasing Lipschitz velocity field (in the literature, the density is usually denoted by $\rho$ and the velocity field $v$ so that the equation reads $\partial_t \rho + \partial_x (v(\rho)\rho) = 0$, but we have kept the notation used in this paper for consistency). On the other hand, the FtL model is a microscopic model for individual vehicles $t \mapsto z_t^i$, each of which is assigned a velocity
\begin{equation} \label{eq:ftl_particle}
    \dot{z}_t^i = a\bigl(v_t^i\bigr), \quad \text{where}\quad v_t^i \coloneqq \frac{l}{z_t^{i+1} - z^i_t}
\end{equation}
and $l > 0$ denotes the length of the vehicles. Since $a$ is assumed to be nonincreasing, the FtL model is a special case of our scheme: If $v_t^{i-1} \leq v_t^{i}$, then
\begin{equation*}
    V(v_t^{i-1}, v_t^{i}) = \min_{v \in [v_t^{i-1}, v_t^i]} a(v) = a(v_t^{i})
\end{equation*}
and if on the other hand $v_t^{i-1} \geq v_t^{i}$, then
\begin{equation*}
    V(v_t^{i-1}, v_t^{i}) = \max_{v \in [v_t^{i}, v_t^{i-1}]} a(v) = a(v_t^{i}).
\end{equation*}
In both cases, the velocity $V$ defined in \eqref{eq:velocity} coincides with the velocity for the FtL model in \eqref{eq:ftl_particle}.

\subsection{Outline of the paper}

Section~\ref{sec:preliminaries} covers theory on conservation laws, particle paths, and continuity equations. Section~\ref{sec:scheme} defines the particle path scheme in full detail, and Section~\ref{sec:existence_uniqueness} establishes existence and uniqueness of solutions to the system of ODEs in~\mbox{\ref{item:2}}. The core of the paper is Section~\ref{sec:pde_formulation}, wherein justification of the continuity equation \eqref{eq:cont_eq} and proof of the approximate entropy inequality \eqref{eq:strong_approx_entropy_ineq} is given. Theorem \ref{thm:main} is proved in Section~\ref{sec:convergence_rate}. In Section~\ref{sec:discussion} we mention possible future work.

\section{Preliminaries} \label{sec:preliminaries}

We begin by reviewing key concepts and results needed for the paper.

\subsection{Scalar conservation laws} \label{subsec:cl}

A solution of the conservation law \eqref{eq:conservation_law} means a weak solution, that is, a function $u \in L^\infty(\R \times (0, T))$ which satisfies
\begin{equation*} \label{eq:weak_cl}
    \int_0^T \int_\R u \partial_t \varphi + f(u) \partial_x \varphi\, dx\,dt + \int_\R u_0(x) \varphi(x, 0)\, dx = 0
\end{equation*}
for all $\varphi \in C^\infty_c(\R \times [0, T))$. An entropy solution is a solution which satisfies the entropy inequality
\begin{equation} \label{eq:weak_general_entropy}
    \int_0^T \int_\R \eta(u) \partial_t \varphi + q(u) \partial_x \varphi\, dx\,dt + \int_\R \eta(u_0(x)) \varphi(x, 0)\, dx \geq 0
\end{equation}
for all nonnegative $\varphi \in C^\infty_c(\R \times [0, T))$ and all entropy pairs $(\eta, q)$ (where ${\eta\colon \R \to \R}$ is a convex function and $q \colon \R \to \R$ is such that $q' = \eta' f'$). Kruzkhov \cite{kruzkov_1970} proved that there is a unique entropy solution of \eqref{eq:conservation_law} for any ${u_0\in L^\infty(\R)}$. It is enough that \eqref{eq:weak_general_entropy} holds for entropy pairs
\begin{equation*}\label{eq:kruzkhov_entropies}
    \eta_k(u) = |u-k|, \qquad q_k(u) = \sgn(u-k)(f(u)-f(k))
\end{equation*}
for all $k \in \R$. (Here and throughout, $\sgn$ denotes the signum function. It returns the sign of its argument or $0$ if the argument is zero.) If $u_0$ is an absolutely integrable function of bounded variation, then so is the entropy solution for all positive times.

\subsection{Particle paths for conservation laws} \label{subsec:particle_paths}

The concept of particle paths for the conservation law \eqref{eq:conservation_law} comes from its interpretation as a continuity equation \eqref{eq:cl_as_cont_eq}. The following theorem describes the link between entropy solutions and particle paths.

\begin{theorem}[Fjordholm, Mæhlen, Ørke \cite{fjordholm_maehlen_oerke}]\label{thm:particle_paths}
    Let $f\in C^1(\R)$ and $u_0\in \BV_{\loc}\cap L^\infty(\R)$. If ${u\in L^\infty(\R\times\R_+)}$ is a weak solution of \eqref{eq:conservation_law} with $u(t)\in \BV_{\loc}(\R)$ for a.e.~$t\geq 0$, then the following are equivalent:
    \begin{enumerate}[label=(\roman*)]
    \item $u$ is the entropy solution of \eqref{eq:conservation_law}.
    \item The ODE
    \begin{equation} \label{eq:flow_with_k}
        \left\{
        \begin{aligned}
            & \dot{x}_t = \frac{f(u(x_t,t)) - f(k)}{u(x_t,t) - k} \qquad \text{for}\ t > s\\
            & x_s = x
        \end{aligned}
        \right.
    \end{equation}
    is well-posed in the Filippov sense for all $x \in \R$, $s \geq 0$ and all $k \in \R$.
    \end{enumerate}
    Moreover, for any $k \in \R$, the entropy solution satisfies
    \begin{equation*}\label{eq:representation-formula}
        u(t) = k + (X^k_t)_\#(u_0 - k) \qquad \text{for } t\geq0
    \end{equation*}
    where $X^k=X^k_t(x)$ is the flow of the ODE \eqref{eq:flow_with_k}.
\end{theorem}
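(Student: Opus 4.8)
The plan is to fix $k \in \R$ and study the scalar field $\lambda_k(x,t) \coloneqq \frac{f(u(x,t)) - f(k)}{u(x,t) - k}$, interpreted as $f'(k)$ where $u = k$, which is the right-hand side of the ODE~\eqref{eq:flow_with_k}. Two observations drive everything. First, since $f$ is Lipschitz, $\lambda_k$ is bounded by $\lip(f)$ and, because $u(t) \in \BV_{\loc}$ for a.e.\ $t$, it is a function of locally bounded variation in $x$ for a.e.\ $t$; hence~\eqref{eq:flow_with_k} is an ODE with bounded, spatially-$\BV$ right-hand side, exactly the setting of Filippov's theory. Second, the algebraic identity $f(u) - f(k) = \lambda_k\,(u-k)$ shows that $u$ is a weak solution of~\eqref{eq:conservation_law} if and only if $w \coloneqq u - k$ is a weak solution of the linear continuity equation $\partial_t w + \partial_x(\lambda_k w) = 0$. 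This reduces the ``moreover'' part to a transport/pushforward statement for a continuity equation whose velocity is $\lambda_k$.

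For (i)~$\Rightarrow$~(ii), I would obtain existence of Filippov solutions for free from boundedness and measurability of the field, so the real content is forward uniqueness, which I would localize to the jump set of $u$. Along a shock curve $x = \gamma(t)$ separating a left state $u^-$ from a right state $u^+$ and moving with Rankine--Hugoniot speed $s = \frac{f(u^+)-f(u^-)}{u^+ - u^-} = \dot\gamma$, the field $\lambda_k$ jumps from $\lambda_k^-$ to $\lambda_k^+$. The only configuration that destroys forward uniqueness of a one-dimensional Filippov flow is the repelling one, $\lambda_k^- < \dot\gamma < \lambda_k^+$, in which trajectories peel off the curve to either side. Writing $g$ for the difference between $f$ and its chord through $(u^-,f(u^-))$ and $(u^+,f(u^+))$, a short computation gives $\lambda_k^{\mp} - s = -g(k)/(u^{\mp} - k)$, so the sign pattern of $(\lambda_k^- - s,\ \lambda_k^+ - s)$ is governed entirely by the sign of $g(k)$ together with the positions of $u^\pm$ relative to $k$. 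The Oleinik condition satisfied by the entropy solution says precisely that $g$ has one sign on the interval between $u^-$ and $u^+$; feeding this in shows that the pair $(\lambda_k^- - s,\ \lambda_k^+ - s)$ is never of the repelling type $(-,+)$, for any $k$ and whether or not $k$ lies inside $[u^-\wedge u^+, u^-\vee u^+]$. This yields forward uniqueness along the jump set.

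For the converse (ii)~$\Rightarrow$~(i), I would argue by contraposition. If $u$ is not the entropy solution, then the structure theorem for $\BV$ functions reduces the Kruzhkov inequalities~\eqref{eq:weak_general_entropy} to a jump condition $\mathcal{H}^1$-almost everywhere on the shock set, so there is a positive-measure set of shock points violating Oleinik; at such a point $g$ changes the ``wrong'' way, and one can exhibit a value of $k$ inside the shock interval producing exactly the repelling pattern $\lambda_k^- < s < \lambda_k^+$, hence a branching Filippov trajectory and failure of uniqueness. Thus well-posedness for all $k$ forces every shock to be admissible, i.e.\ $u$ is the entropy solution.

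Finally, for the representation formula, I would use that forward uniqueness makes $X^k_t$ a well-defined, order-preserving (hence non-crossing) forward flow. The curve $t \mapsto k + (X^k_t)_\#(u_0 - k)$ is then a weak solution of $\partial_t w + \partial_x(\lambda_k w)=0$ carried by the flow, and by uniqueness for the continuity equation associated with a well-posed Filippov flow it must coincide with $u$. I expect the main obstacle to be the uniqueness half of (i)~$\Leftrightarrow$~(ii): turning the clean jump-by-jump picture into a rigorous statement for a general weak solution whose spatial slices are only $\BV_{\loc}$ requires the fine structure of $\BV$ functions, namely rectifiability of the jump set, control of the diffuse (Cantor) part of the derivative, and the $\mathcal{H}^1$-a.e.\ reduction of the entropy condition. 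It is there, rather than in the algebra linking Oleinik to the repelling configuration, that the technical work concentrates.
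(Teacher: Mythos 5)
A preliminary remark on the comparison itself: this paper does not prove Theorem~\ref{thm:particle_paths}; it is imported verbatim from \cite{fjordholm_maehlen_oerke} and used as a black box (as is the representation machinery of Lemma~\ref{lemma:general_representation_formula}). So your proposal can only be judged on its own merits and against the cited work, not against an internal argument of this paper.

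Your algebraic core is correct and is indeed the right mechanism: with $g$ the deviation of $f$ from its chord over $[u^-\wedge u^+, u^-\vee u^+]$, the identity $\lambda_k^{\mp} - s = -g(k)/(u^{\mp}-k)$ holds, Oleinik's condition forces a single sign of $g$ on the jump interval, and a short case check shows that the repelling pattern $\lambda_k^- < s < \lambda_k^+$ can then never occur for any $k$, while any non-entropic jump admits a $k$ in the jump interval that produces it. But the proposal has genuine gaps precisely where the real work lies. First, the dichotomy ``forward non-uniqueness of a one-dimensional Filippov flow occurs only at repelling jumps'' is a statement about fields that are piecewise continuous across a Lipschitz curve; your field $\lambda_k(\cdot,t)$ is a priori only $\BV_{\loc}$ in $x$ for a.e.~$t$, and reducing to the clean jump-by-jump picture (rectifiability of the jump set, vanishing of entropy production on the diffuse and Cantor parts via Vol'pert's chain rule, and space-time $\BV$ regularity, which must first be extracted from the hypothesis $u(t)\in\BV_{\loc}$ for a.e.~$t$ through the equation) is the bulk of the proof, not a technical afterthought; you name this obstacle but supply nothing toward it. Second, in (ii)~$\Rightarrow$~(i), a repelling configuration on a positive-$\mathcal{H}^1$ subset of the jump set does not immediately yield two distinct forward Filippov trajectories: branching requires the repelling structure to persist along a set of times that the trajectory actually spends on the curve, and the violating set is a priori only measurable, so an explicit construction of two solutions is needed. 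Third, the ``moreover'' part does not follow from an off-the-shelf uniqueness theorem: for a velocity field that is merely bounded and one-sidedly controlled, uniqueness of bounded weak solutions of $\partial_t w + \partial_x(\lambda_k w)=0$ is delicate (concentration and duality issues), and in the source this representation formula is itself a substantial theorem rather than a corollary. In short: right skeleton and correct key computation, but the analytic content that turns the sketch into a proof is absent.
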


The ODE \eqref{eq:flow_with_k} (and \eqref{eq:exact_flow}) is interpreted in the Filippov sense (see \cite{Filippov60}), as weak solutions of \eqref{eq:conservation_law} are generally discontinuous. In particular, when a particle encounters a shock, its velocity must typically be assigned from an infinite set of possible values. The prototypical example of this situation is the Riemann problem: Let $u$ be the entropy solution of \eqref{eq:conservation_law} with initial data
\begin{equation*}
    u_0(x) =
    \begin{cases}
        u_l & \text{for }x<0\\
        u_r & \text{for } x>0
    \end{cases}
\end{equation*}
for $u_l, u_r\in\R$. As shown in \cite[Theorem~1.6]{fjordholm_maehlen_oerke}, the unique (Fillipov) solution of \eqref{eq:exact_flow} starting at $x = 0$ is the straight line $x_t=Vt$, with velocity $V = V(u_l, u_r)$ given by
\begin{equation}\label{eq:velocityForRiemannParticle}
V(u_l, u_r) =
    \begin{cases}
        \min_{v\in[u_l, u_r]}a(v) & \text{if}\ 0<u_l\leq u_r\vspace{3pt}\\
        \max_{v\in[u_r, u_l]}a(v) & \text{if}\ 0<u_r\leq u_l\vspace{3pt}\\
        (f_\smallsmile)'(0) & \text{if}\ u_l\leq 0\leq u_r\vspace{3pt}\\
        (f_\smallfrown)'(0) & \text{if}\ u_r\leq 0\leq u_l\vspace{3pt}\\
        \max_{v\in[u_l, u_r]} a(v) & \text{if}\ u_l\leq u_r<0\vspace{3pt}\\
        \min_{v\in[u_r, u_l]} a(v) & \text{if}\ u_r\leq u_l<0,
    \end{cases}
\end{equation}
where $f_\smallsmile$ and $f_\smallfrown$ are the convex and concave envelopes of $f$ between $u_l$ and $u_r$ (definitions can be found in \mbox{\cite[Chapter 2]{holden_risebro_2015}}). Comparing with \eqref{eq:velocity}, it is clear that our choice of particle velocity in~\mbox{\ref{item:2}} is simply a special case of \eqref{eq:velocityForRiemannParticle} for nonnegative left and right states $u_l$ and $u_r$.

\subsection{Continuity equations} \label{subsec:continuity_eq}

Consider the continuity equation
\begin{equation} \label{eq:general_continuity} 
    \partial_t u + \partial_x(b u) = \source
\end{equation}
with initial data $u_0 \in L^\infty(\R)$, velocity field $b \in L^\infty(\R \times (0, T))$, and a source $\source \in L^\infty((0, T); L^1(\R))$. A weak solution of \eqref{eq:general_continuity} is a function $u \in L^\infty(\R \times (0, T))$ which satisfies
\begin{equation} \label{eq:weak_general_continuity}
    \int_0^T \int_{\R} \bigl(\partial_t \varphi + b \partial_x \varphi\bigr) u\, dx\,dt + \int_0^T \int_{\R} \varphi h\, dx\,dt + \int_{\R} u_0(x) \varphi(x, 0)\, dx = 0
\end{equation}
for all $\varphi \in C^\infty_c(\R \times [0, T))$.

\begin{remark}
    While measure-valued solutions of \eqref{eq:general_continuity} are generally more natural in this setting (see e.g.~\cite{ambrosio_gigli_savare_2005}), we include only what is presently needed.
\end{remark}

The continuity equation can be solved via the system of ODEs
\begin{equation} \label{eq:one_sided_lipschitz_flow}
    \left\{
    \begin{aligned}
        & \dot{x}_t = b(x_t, t) \qquad \text{for a.e.~}\ t > s\\
        & x_s = x.
    \end{aligned}
    \right.
\end{equation}
Assume that $b$ is continuous in space for a.e.~$t$, so that \eqref{eq:one_sided_lipschitz_flow} can be understood in the usual sense (in particular, we will not need the Filippov solution concept). The following lemma is a standard extension of the Cauchy--Lipschitz theory for ODEs; see e.g.~\cite{conway_1967,lions_seeger_2024}.

\begin{lemma}[Forward flow] \label{lemma:forward_flow}
    Assume that $x \mapsto b(x, t)$ is continuous and one-sided Lipschitz, i.e.~there is a constant $C > 0$ such that
    \begin{equation} \label{eq:one_sided_lip}
        \bigl(b(x, t) - b(y, t)\bigr)(x-y) \leq C (x-y)^2
    \end{equation}
    for all $x, y \in \R$, for a.e.~$t \in (0, T)$. Then the system \eqref{eq:one_sided_lipschitz_flow} generates a unique forward Lipschitz flow $X = X_t(x, s)$:
    \begin{enumerate}[label=(\roman*)]
        \item For all $(x, s) \in \R \times [0, T]$, the ODE \eqref{eq:one_sided_lipschitz_flow} has a unique continuously differentiable solution $t \mapsto x_t$ on $(s, T)$.
        \item The function $(x, s, t) \mapsto X_t(x, s)$ is jointly Lipschitz for $0 < s \leq t < T$ and $x \in \R$.
        \item The identity $X_{s, s}(x) = x$ holds for all $(x, s) \in \R \times [0, T]$. Moreover, 
        \begin{equation*}
            X_t(X_r(s, x), r) = X_t(x, s)
        \end{equation*}
        for all $0 \leq s \leq r \leq t \leq T$ and $x \in \R$.
    \end{enumerate}
\end{lemma}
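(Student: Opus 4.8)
The plan is to establish the four assertions in sequence, each leaning on the previous ones, with the one-sided Lipschitz hypothesis \eqref{eq:one_sided_lip} doing the essential work. For existence in (i), I fix $(x,s)$ and note that existence and uniqueness come from \emph{different} hypotheses: since $b$ is only continuous (not Lipschitz) in space, I cannot invoke Picard--Lindel\"of, but the Cauchy--Peano theorem, in its Carath\'eodory form to accommodate mere measurability in time, still yields a local absolutely continuous solution of \eqref{eq:one_sided_lipschitz_flow}. The uniform speed bound $|\dot{x}_t| = |b(x_t,t)| \leq \norm{b}_{L^\infty(\R\times(0,T))}$ precludes finite-time escape to infinity, so the solution extends to all of $(s,T)$. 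When $b$ is also continuous in time, $t \mapsto b(x_t,t)$ is continuous along the trajectory, which upgrades the solution from absolutely continuous to $C^1$.

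Uniqueness and spatial Lipschitz dependence both fall out of a single Gr\"onwall estimate. Let $t \mapsto x_t$ and $t\mapsto y_t$ solve \eqref{eq:one_sided_lipschitz_flow} and set $w_t \coloneqq x_t - y_t$. Differentiating and applying \eqref{eq:one_sided_lip} gives
\begin{equation*}
    \frac{d}{dt}\, w_t^2 = 2 w_t \bigl(b(x_t,t) - b(y_t,t)\bigr) \leq 2C\, w_t^2,
\end{equation*}
so $w_t^2 \leq e^{2C(t-s)} w_s^2$ for $t \geq s$. Taking $w_s = 0$ forces $w_t \equiv 0$, which is forward uniqueness; this is precisely where one-sidedness is used, and it is also why backward uniqueness is \emph{not} claimed and we obtain only a \emph{forward} flow. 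Taking instead distinct starting points $x,y$ yields
\begin{equation*}
    |X_t(x,s) - X_t(y,s)| \leq e^{C(t-s)}\,|x-y|,
\end{equation*}
the Lipschitz dependence on initial position required for (ii).

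The remaining regularity and the algebraic identities are then cheap. Lipschitz dependence in $t$ is immediate from the speed bound $|\dot{x}_t| \leq \norm{b}_{L^\infty}$. Dependence on the starting time $s$ I obtain by combining the two: for $s_1 \leq s_2$, the flow identity gives $X_t(x,s_1) = X_t\bigl(X_{s_2}(x,s_1), s_2\bigr)$, and then the spatial Lipschitz estimate bounds $|X_t(x,s_1) - X_t(x,s_2)|$ by a constant times $|X_{s_2}(x,s_1) - x| \leq \norm{b}_{L^\infty}|s_2 - s_1|$; joint Lipschitzness follows from Lipschitzness in each argument with uniform constants. The semigroup identity in (iii) is a direct consequence of forward uniqueness, since both $t \mapsto X_t(x,s)$ and $t \mapsto X_t\bigl(X_r(x,s), r\bigr)$ solve \eqref{eq:one_sided_lipschitz_flow} on $(r,T)$ with the common value $X_r(x,s)$ at time $r$ and must therefore coincide, while $X_s(x,s) = x$ is merely the initial condition.

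I expect the only genuine subtlety to reside in the existence step, namely handling the measurability of $b$ in time within the Carath\'eodory framework and verifying the claimed regularity of the constructed solution; everything downstream is a routine Gr\"onwall argument driven by \eqref{eq:one_sided_lip}. For this reason I would defer the technical details of the existence construction to the standard references \cite{conway_1967,lions_seeger_2024}.
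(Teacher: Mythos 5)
Your proof is correct, but there is nothing in the paper to compare it against: the paper does not prove this lemma at all, stating it as ``a standard extension of the Cauchy--Lipschitz theory for ODEs'' and deferring entirely to \cite{conway_1967,lions_seeger_2024}. Your argument supplies exactly the standard route underlying those references, and all the key steps are sound: Carath\'eodory existence together with the $L^\infty$ bound on $b$ rules out blow-up and gives global-in-time solutions; the Gr\"onwall estimate $\frac{d}{dt}(x_t-y_t)^2 \leq 2C(x_t-y_t)^2$, driven by the one-sided Lipschitz condition \eqref{eq:one_sided_lip}, yields forward uniqueness and the spatial Lipschitz bound $e^{C(t-s)}|x-y|$ in one stroke (and correctly explains why only a \emph{forward} flow is obtained); and the semigroup identities in (iii) follow from forward uniqueness alone. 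Two minor points of presentation. First, your proof of Lipschitz dependence on $s$ in (ii) invokes the flow identity of (iii); since (iii) depends only on (i), there is no circularity, but you should reorder or at least remark on this. Second, the $C^1$ regularity claimed in (i) sits awkwardly with the hypotheses, which permit $b$ to be merely measurable in time (the ODE \eqref{eq:one_sided_lipschitz_flow} is only required to hold for a.e.~$t$), so that solutions are in general only absolutely continuous; you handle this correctly by noting that continuity of $b$ in time is what upgrades the regularity. This wrinkle is in the paper's statement rather than in your argument, and it is harmless in the paper's application, where the velocity $A$ from \eqref{eq:velocity_interpolation_global} is piecewise smooth in time away from the finitely many collision times in $\mathcal{T}_c$.
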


We denote $X_t(x, 0)$ by $X_t(x)$ for simplicity.

\begin{lemma}[Representation formula] \label{lemma:general_representation_formula}
    Let $b$ satisfy the assumptions of Lemma~\ref{lemma:forward_flow} and assume that $u \in L^\infty(\R \times (0, T))$ is a weak solution of \eqref{eq:general_continuity}. Then $u$ satisfies
    \begin{equation*}\label{eq:continuity_representation}
        u(t) = (X_t)_\# u_0 + \int_0^t (X_t(r))_\# \source(r)\, dr,
    \end{equation*}
    in the sense of
    \begin{equation} \label{eq:push_forward}
        \int_\R \vartheta(x) u(x, t) \,dx = \int_\R \vartheta(X_t(x)) u_0(x) \,dx + \int_0^t \int_\R \vartheta(X_t(x, r)) h(x, r) \,dx\,dr
    \end{equation}
    for all $t \in [0, T]$ and $\vartheta \in C_c(\R)$, where $X$ is the unique forward Lipschitz flow generated by \eqref{eq:one_sided_lipschitz_flow}.
\end{lemma}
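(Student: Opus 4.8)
\emph{The plan} is to test the weak formulation \eqref{eq:weak_general_continuity} against a function that is constant along the flow, so that the transport terms collapse and only the endpoint traces survive. Fix $\tau \in (0, T)$. Since $C_c^\infty(\R)$ is dense in $C_c(\R)$ in the uniform norm and both sides of \eqref{eq:push_forward} depend continuously on $\vartheta$ in that norm (using $u_0, u \in L^\infty$ over the bounded supports that occur, and $h \in L^\infty((0,T);L^1(\R))$), it suffices to treat $\vartheta \in C_c^\infty(\R)$. The central object is the backward-transported function
\[
    \Phi(x, t) := \vartheta\bigl(X_\tau(x, t)\bigr), \qquad 0 \le t \le \tau .
\]
By Lemma~\ref{lemma:forward_flow}(ii) the map $(x,t)\mapsto X_\tau(x,t)$ is Lipschitz, so $\Phi$ is Lipschitz; and since $b$ is bounded, $|X_\tau(x,t)-x| \le \norm{b}_{L^\infty}\,T$, so the $x$-support of $\Phi$ lies in a fixed compact set for all $t \in [0,\tau]$.

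The key step, which I also expect to be the main obstacle, is to verify the transport identity $\partial_t \Phi + b\,\partial_x \Phi = 0$ for a.e.\ $(x,t)$ despite $\Phi$ being merely Lipschitz. The semigroup property in Lemma~\ref{lemma:forward_flow}(iii) gives $X_\tau(X_t(x_0,t_0),t) = X_\tau(x_0,t_0)$, so $t \mapsto \Phi(X_t(x_0,t_0),t)$ is constant along every characteristic. By Rademacher's theorem $\Phi$ is differentiable at a.e.\ point $(x_0,t_0)$; at such a point, differentiating this constant map at $t=t_0$ by the chain rule (valid since the characteristic is $C^1$ with $\dot X = b$) yields $\partial_t\Phi(x_0,t_0) + b(x_0,t_0)\,\partial_x\Phi(x_0,t_0) = 0$. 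Coupled with this identity, the second obstacle is that $\Phi$ is not a smooth test function: I would first extend \eqref{eq:weak_general_continuity} to Lipschitz test functions compactly supported in $\R\times[0,T)$ by mollification, the bounds $b,u\in L^\infty$ and $h\in L^\infty((0,T);L^1(\R))$ allowing passage to the limit, the only delicate point being time-mollification near $t=0$ where test functions need not vanish (handled by a one-sided mollifier).

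I would then take $\varphi(x,t)=\beta_\eps(t)\Phi(x,t)$, where $\beta_\eps$ is the Lipschitz function equal to $1$ on $[0,\tau-\eps]$, decreasing linearly to $0$ on $[\tau-\eps,\tau]$, and $0$ afterwards, so that $\varphi$ is supported in $\R\times[0,\tau]$, the region where $\Phi$ is defined. Since the transport part vanishes by the previous step, $\partial_t\varphi+b\,\partial_x\varphi=\beta_\eps'(t)\Phi$, and \eqref{eq:weak_general_continuity} reduces to
\[
    \int_0^T\!\!\int_\R \beta_\eps'\,\Phi\, u\,dx\,dt + \int_0^T\!\!\int_\R \beta_\eps\,\Phi\, h\,dx\,dt + \int_\R u_0(x)\,\vartheta\bigl(X_\tau(x)\bigr)\,dx = 0,
\]
where I used $\beta_\eps(0)=1$ and $X_\tau(x,0)=X_\tau(x)$ in the last term.

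Finally I would let $\eps\to 0$. Because $\beta_\eps'=-\eps^{-1}\indicator_{[\tau-\eps,\tau]}$, the first integral tends to $-\int_\R\vartheta(x)\,u(x,\tau)\,dx$ for a.e.\ $\tau$ by Lebesgue differentiation (using $X_\tau(x,\tau)=x$), while $\beta_\eps\to\indicator_{[0,\tau]}$ makes the second tend to $\int_0^\tau\!\int_\R\vartheta(X_\tau(x,r))\,h(x,r)\,dx\,dr$ by dominated convergence. Rearranging gives \eqref{eq:push_forward} for a.e.\ $\tau\in(0,T)$. Since the right-hand side is continuous in $\tau$ (continuity of the flow plus dominated convergence) and $u$ admits a weakly time-continuous representative — as $\tfrac{d}{dt}\int_\R u\,\vartheta = \int_\R(bu\,\partial_x\vartheta + h\vartheta)$ is bounded in $t$ — the identity extends to every $t\in[0,T]$, completing the proof.
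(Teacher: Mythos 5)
Your proof cannot be compared to the paper's own argument in the usual sense, because the paper omits the proof entirely, deferring to \cite{fjordholm_maehlen_oerke} and \cite{poupaud_rascle_1997}. What you propose is the classical duality route (in the spirit of Bouchut--James and Poupaud--Rascle): test the weak formulation \eqref{eq:weak_general_continuity} against the backward-transported function $\Phi(x,t)=\vartheta(X_\tau(x,t))$, cut off in time by $\beta_\eps$, and pass to the limit. The reduction to $\vartheta\in C_c^\infty$, the extension of \eqref{eq:weak_general_continuity} to Lipschitz test functions, the $\eps\to0$ limit via Lebesgue differentiation, and the upgrade from a.e.~$\tau$ to all $t\in[0,T]$ through the weakly time-continuous representative of $u$ are all handled correctly.

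There is, however, a genuine gap in the step you yourself identify as the crux, namely the a.e.~identity $\partial_t\Phi+b\,\partial_x\Phi=0$. Your chain-rule argument needs the characteristic through $(x_0,t_0)$ to satisfy $\dot x(t_0)=b(x_0,t_0)$ \emph{at its own initial time}. Under the hypotheses of Lemma~\ref{lemma:forward_flow}, $b$ is only bounded measurable in $t$ (and continuous in $x$ for a.e.~$t$), and the ODE \eqref{eq:one_sided_lipschitz_flow} holds only for a.e.~$t>t_0$, with an exceptional null set of times that depends on the trajectory; nothing prevents $t_0$ itself from lying in it. Concretely, $\dot x(t_0)=\lim_{h\downarrow 0}\frac1h\int_{t_0}^{t_0+h}b(X_r(x_0,t_0),r)\,dr$, and identifying this limit with $b(x_0,t_0)$ requires (i) that $t_0$ be a Lebesgue point of $r\mapsto b(x_0,r)$, which holds for a.e.~$(x_0,t_0)$ by Fubini, and (ii) control of $\frac1h\int_{t_0}^{t_0+h}|b(X_r,r)-b(x_0,r)|\,dr$, which does \emph{not} follow from continuity of $b(\cdot,r)$ alone, since the modulus of continuity is not uniform in $r$. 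Point (ii) is where a real argument is missing. It can be supplied: the one-sided Lipschitz condition \eqref{eq:one_sided_lip} makes $x\mapsto b(x,r)-Cx$ nonincreasing for a.e.~$r$, and integrating the resulting monotone bound in $x_0$ shows that the error $e_h(x_0,t_0)\coloneqq\frac{X_{t_0+h}(x_0,t_0)-x_0}{h}-b(x_0,t_0)$ tends to $0$ locally in measure; since at points of differentiability of $\Phi$ one has $0=\partial_t\Phi+b\,\partial_x\Phi+\partial_x\Phi\, e_h+o(1)$, the a.e.~limit and the in-measure limit must agree, giving the identity. Alternatively, note that in the paper's only application of this lemma the velocity $b=A$ is continuous in $(x,t)$ away from finitely many collision times, where your chain rule is valid verbatim; but as a proof of the lemma under its stated hypotheses, this measure-theoretic step must be added.
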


This is a special case of \cite[Theorem~6.1, Remark~6.2]{fjordholm_maehlen_oerke} (see also \cite{poupaud_rascle_1997}). Though the proof is much simpler in this case, we omit it.

\section{The scheme} \label{sec:scheme}

In this section, we provide further details on the particle path scheme introduced in \mbox{\ref{item:1}}--\mbox{\ref{item:3}} and fix notation which will be used throughout.

\subsection{Approximation of initial data} \label{subsec:approx_initial_data}

Let $(x_0^i)_{i=1}^N$ be a strictly increasing tuple of real values representing the initial positions of $N$ particles. Extend the tuple symbolically by ${x_0^{0} \coloneqq -\infty}$ and ${x_0^{N+1} \coloneqq \infty}$. Recalling that $u_0$ is assumed to be nonnegative, we define
\begin{equation*} \label{eq:intial_approx_pieces}
    v_0^i \coloneqq \frac{1}{\dx_0^{i}} \int_{x_0^i}^{x_0^{i+1}} u_0(x)\, dx
\end{equation*}
for $i \in \set{0,\dots, N}$, where $\dx^i_0 \coloneqq x_0^{i+1} - x_0^i$. Note in particular that $v_0^0 = 0$ and $v_0^N = 0$. Let $\indicator_I$ denote the indicator function on a set $I \subset \R$. Then
\begin{equation*} \label{eq:initial_approximation}
    v_0(x) \coloneqq \sum_{i = 0}^{N} v_0^i \indicator_{(x_0^i, x_0^{i+1})}(x)
\end{equation*}
is a piecewise constant approximation of $u_0$.

\subsection{Particle dynamics} \label{subsec:dynamics}

Fix $x_t^{0} = -\infty$ and $x_t^{N+1} = \infty$, along with $v^0_t = 0$ and $v_t^N = 0$, for all $t \in [0, T]$. Let particle positions $(x_t^i)_{i = 1}^{N}$ and local densities $(v_t^i)_{i = 1}^{N-1}$ evolve according to the system
\begin{equation}
    \left\{
    \begin{aligned} \label{eq:particle_ode}
        & \dot{x}_t^i = V\bigl(v_t^{i-1}, v_t^{i}\bigr) \qquad \text{for}\ 0 < t < t_1 \\
        & x^{i}|_{t = 0} = x_{0}^{i},
    \end{aligned}
    \right.
\end{equation}
coupled with
\begin{equation} \label{eq:value_ode}
    v_t^{i} = \frac{\dx_0^{i}}{\dx_t^{i}} v_0^i,
\end{equation}
where $t_1$ is the first \textit{collision} time of two or more particles, defined by
\begin{equation} \label{eq:collision_time}
    t_1 = \sup\bigl\{t \in (0, T)\colon x_t^{i} < x_t^{i+1}\ \text{for all}\ i \in \set{1, \dots, N-1}\bigr\} \wedge T,
\end{equation}
and $V$ is the velocity field from \eqref{eq:velocity}. (Here, $x \wedge y$ denotes $\min\{x, y\}$) For brevity, we shall sometimes write $(x_t^i)_{i = 1}^{N}$ and $(v_t^i)_{i = 1}^{N-1}$ as $(x_t^i)$ and $(v_t^i)$.

An approximate solution of \eqref{eq:conservation_law} can now be defined as
\begin{equation} \label{eq:approx_sol_local}
    v(x, t) = \sum_{i = 0}^{N} v_t^i \indicator_{(x_t^i, x_t^{i+1})}(x)
\end{equation}
on $\R \times (0, t_1)$.

\subsection{Resolution of collisions} \label{subsec:collision_resolution}

When particles collide, the system \eqref{eq:particle_ode}--\eqref{eq:value_ode} is stopped. The leftmost particles and corresponding local densities are deleted, and then the system is continued (see Fig.~\ref{fig:collision} for an illustration of this situation).

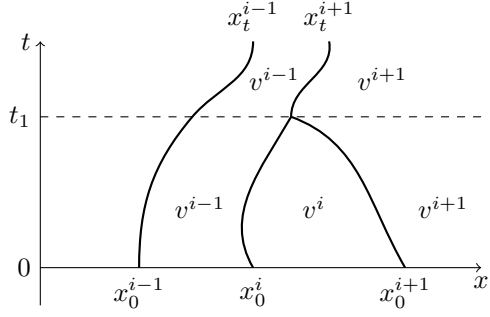
\begin{figure}
    \centering    
    \begin{tikzpicture}
        \draw[->] (-2.8,0) node[left] {$0$} -- (3,0) node[below] {$x$};
        \draw[->] (-2.8, -0.5) -- (-2.8, 3) node[left] {$t$};
    
        \draw[dashed] (-2.8,2) node[left] {$t_1$} -- (3,2);

        \draw[thick, smooth] (-1.5,0) node[below] {$x^{i-1}_0$} to[out=90, in=230] (-0.8,2) to[out=50, in=270] (0,3) node[above] {$x^{i-1}_t$};

        \draw[thick, smooth, tension=1] (-0, 0) node[below] {$x^i_0$} to[out=120, in=240] (0.5, 2) to[out=90, in=280] (1,3) node[above] {$x^{i+1}_t$};
        
        \draw[thick, smooth, tension=1] (2,0) node[below] {$x^{i+1}_0$} to[out=120, in=-20] (0.5,2);
    
        \node at (-0.7, 0.8) {$v^{i-1}$};
        \node at (0.8, 0.8) {$v^i$};
        \node at (2.5, 0.8) {$v^{i+1}$};
        \node at (0.28, 2.5) {$v^{i-1}$};
        \node at (1.7, 2.5) {$v^{i+1}$};
    \end{tikzpicture}

    \caption{Particles $x^i$ and $x^{i+1}$ collide at time $t_1$. The particle $x^{i}$ and the local density $v^i$ are deleted before the system is restarted.}
    \label{fig:collision}
\end{figure}

More specifically, let $\indexset_1$ be the index set of remaining particles after $t_1$. That is,
\begin{equation*}
    \indexset_1 \coloneqq \bigl\{i \in \set{1, \dots, N}\colon x_{t_1}^{i} \neq x_{t_1}^{i+1} \bigr\}.
\end{equation*}
Remaining particles and local densities can thus be written as $(x^{i})_{i \in \indexset_1}$ and $(v^{i})_{i \in \indexset_1}$. Let $N_1 \coloneqq |I_1|$, and define $\pi_1$ to be the increasing, one-to-one function which maps the index set $\set{1, \dots, N_1}$ to $\indexset_1$. Then remaining particles can equivalently be written as $\bigl(x^{\pi_1(i)}\bigr)_{i = 1}^{N_1}$.

After the collision at $t_1$, the system \eqref{eq:particle_ode}--\eqref{eq:value_ode} is continued with updated indices $\indexset_1$ and initial conditions $(x_{t_1}^{\pi_1(i)})_{i = 1}^{N_1}$ and $(x_{t_1}^{\pi_1(i)})_{i = 1}^{N_1-1}$.

Define collision times recursively as
\begin{equation*}
    t_{j+1} \coloneqq \sup\bigl\{t \in (t_j, T)\colon x_t^{\pi_j(i)} < x_t^{\pi_j(i+1)}\ \text{for all}\ i \in \set{1, \dots, N_j}\bigr\}.
\end{equation*}
The set of all collision times $(t_j)_{j \geq 0}$ will be denoted by $\mathcal{T}_c$. The index set of remaining particles after $t_{j+1}$ is then
\begin{equation*}
    \indexset_{j+1} \coloneqq \bigl\{\pi_j(i) \colon x_{t_{j+1}}^{\pi_j(i)} \neq x_{t_{j+1}}^{\pi_j(i+1)},\ i \in \set{1, \dots, N_j} \bigr\}.
\end{equation*}
Let $N_{j+1} \coloneqq |\indexset_{j+1}|$ and let $\pi_{j+1}$ be the increasing, one-to-one function mapping $\set{1, \dots, N_{j+1}}$ to $\indexset_{j+1}$. Set $t_0 = 0$ and let $\pi_0(i) = i$ for all $i \in \set{0, \dots, N}$. In view of the above definitions, we denote by $(x^\pi)$ and $(v^\pi)$ the global-in-time particles and local densities, given by
\begin{equation}
    (x_t^\pi) = (x_t^{\pi(i)})_{i=1}^{N_j}, \qquad (v_t^\pi)= (v_t^{\pi(i)})_{i=1}^{N_j-1}
\end{equation}
for $t \in (t_j, t_{j+1})$.

This allows us to build a global approximate solution of \eqref{eq:conservation_law}, defined piecewise in time as
\begin{equation} \label{eq:approx_sol_global}
    v(x, t) \coloneqq \sum_{i = 0}^{N_j+1} v_t^{\pi_j(i)} \indicator_{\bigl(x_t^{\pi_j(i)}, x_t^{\pi_j(i+1)}\bigr)}(x)
\end{equation}
for $(x, t) \in \R \times (t_j, t_{j+1})$.

\section{Existence and uniqueness of the approximation} \label{sec:existence_uniqueness}

In this section we consider the system of ODEs \eqref{eq:particle_ode}--\eqref{eq:value_ode} which is at the core of the particle path scheme. We establish first some a priori properties of solutions, assuming their existence. We then use these a priori properties and classical Cauchy--Lipschitz theory to show that solutions indeed exist and are unique.

The integer $N$ will be fixed throughout the section. We assume that initial data $(v_0^i)_{i = 1}^{N-1}$, $(x_0^i)_{i = 1}^N$, as constructed in \mbox{\ref{item:1}} from nonnegative $u_0 \in \BV \cap L^1(\R)$, is given. Since there is only $N$ particles there can be at most $N-1$ collisions (note that new particles are never created), so it will suffice to analyze the scheme locally in time. Thus, we consider only the interval $(0, t_1)$.

\subsection{A priori properties}

We begin by defining a suitable notion of solutions of \eqref{eq:particle_ode}--\eqref{eq:value_ode}.

\begin{definition} \label{def:system_solution}
    A solution of the system \eqref{eq:particle_ode}--\eqref{eq:value_ode} on an interval $(0, t')$ is a pair $(x^{i})_{i = 1}^N$, $(v^{i})_{i = 1}^{N-1}$ such that
    \begin{enumerate}[label=(\roman*)]
        \item $(x^{i})_{i = 1}^N \colon (0, t') \to \R^N$ is continuously differentiable and satisfies~\eqref{eq:particle_ode},
        \item $x_t^1 < x_t^2 < \cdots < x_t^N$ for all $t \in (0, t')$,
        \item $(v^{i})_{i = 1}^{N-1}\colon (0, t') \to \R^{N-1}$ satisfies~\eqref{eq:value_ode} for all $t \in (0, t')$.
    \end{enumerate}
\end{definition}

The following proposition gathers essential characteristics of such solutions. To simplify notation, let $v_0^* \coloneqq \max_{i \in \set{0, \dots, N}} v_0^i$ and moreover
\begin{equation*} \label{eq:a_bounds}
    a_{\min} \coloneqq \min_{v \in [0, v_0^*]} a(v), \qquad a_{\max} \coloneqq \max_{v \in [0, v_0^*]} a(v),
\end{equation*}
where we recall that $a(v) = f(v)/v$.

\begin{proposition} \label{prop:a_priori_properties}
    Assume that $(x^{i})_{i = 1}^N$, $(v^{i})_{i = 1}^{N-1}$ is a solution of \eqref{eq:particle_ode}--\eqref{eq:value_ode} on a given interval $(0, t')$ according to Definition~\ref{def:system_solution}. Then, for all $t \in (0, t')$:
    \begin{enumerate}[label=(\roman*)]
        \item (Mass conservation) The mass between particles is conserved, i.e.
        \begin{equation*} \label{eq:mass_conservation}
            \dx_t^i v_t^i = \dx_0^i v_0^i
        \end{equation*}
        for all $i \in \set{0, \dots, N}$.
        \item (Maximum principle) The local densities satisfy
        \begin{equation} \label{eq:value_bounds}
            \frac{\dx_0^i}{\dx_0^{i} + t (a_{\max} - a_{\min})} v_0^i \leq v_t^i \leq v_0^*
        \end{equation}
        for all $i \in \{1, \dots, N - 1\}$.
        \item (Particle separation) The distances between particles satisfy
        \begin{equation} \label{eq:particle_bounds}
            \frac{v_0^i}{v_0^*} \dx_0^{i} \leq \dx_t^{i} \leq \dx_0^{i} + t (a_{\max} - a_{\min})
        \end{equation}
        for all $i \in \{1, \dots, N - 1\}$.
        \item (Diminishing total variation) The total variation of the approximate solution~\eqref{eq:approx_sol_local} is nonincreasing, i.e.
        \begin{equation} \label{eq:tv_dinimishing}
            |v(t)|_{\BV(\R)} \leq |v_0|_{\BV(\R)},
        \end{equation}
        where $|\cdot|_{\BV(\R)}$ denotes the total variation on $\R$.
    \end{enumerate}
\end{proposition}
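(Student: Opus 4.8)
The plan is to establish the four properties in the order (i), (ii), (iii), (iv), since each builds on the previous. Property (i) is immediate: rearranging the coupling \eqref{eq:value_ode} gives $\dx_t^i v_t^i = \dx_0^i v_0^i$, i.e.~the mass in each cell is conserved by construction. Differentiating \eqref{eq:value_ode} then yields the convenient identity $\dot v_t^i = -\frac{v_t^i}{\dx_t^i}\dot{\dx}_t^i$, where $\dot{\dx}_t^i = V(v_t^i, v_t^{i+1}) - V(v_t^{i-1}, v_t^i)$ by \eqref{eq:particle_ode}. This reduces the whole proposition to controlling the gap dynamics $\dot{\dx}_t^i$ through the structure of $V$.

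For the upper bound $v_t^i \le v_0^*$ in (ii), I would examine the profile at a cell $i$ where the density is maximal. If $v_t^i \ge v_t^{i-1}$ and $v_t^i \ge v_t^{i+1}$, then by \eqref{eq:velocity} one has $V(v_t^{i-1}, v_t^i) = \min_{[v_t^{i-1}, v_t^i]} a \le a(v_t^i)$ and $V(v_t^i, v_t^{i+1}) = \max_{[v_t^{i+1}, v_t^i]} a \ge a(v_t^i)$, so $\dot{\dx}_t^i \ge 0$ and hence $\dot v_t^i \le 0$. To make this rigorous I would set $M(t) = \max_i v_t^i$; as a finite maximum of $C^1$ functions it is Lipschitz, and the computation shows $\frac{d}{dt} M \le 0$ wherever it is differentiable, whence $M(t) \le M(0) = v_0^*$. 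Nonnegativity $v_t^i \ge 0$ is clear from (i), since the initial masses are nonnegative and the gaps stay positive by Definition~\ref{def:system_solution}(ii).

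Given $0 \le v_t^i \le v_0^*$, every argument of $V$ appearing in the system lies in $[0, v_0^*]$, so each velocity $V(v_t^{i-1}, v_t^i)$ --- being a min or max of $a$ over a subinterval of $[0, v_0^*]$ --- satisfies $a_{\min} \le V(v_t^{i-1}, v_t^i) \le a_{\max}$. Thus $\dot{\dx}_t^i \le a_{\max} - a_{\min}$, and integrating in time gives the upper bound $\dx_t^i \le \dx_0^i + t(a_{\max} - a_{\min})$ of (iii). The remaining inequalities follow purely from mass conservation: the lower gap bound $\dx_t^i \ge (v_0^i/v_0^*)\dx_0^i$ is $\dx_t^i = \dx_0^i v_0^i / v_t^i$ combined with $v_t^i \le v_0^*$, and the lower density bound in (ii) is the upper gap bound inserted into $v_t^i = \dx_0^i v_0^i / \dx_t^i$.

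The main obstacle is (iv). I would write $|v(t)|_{\BV(\R)} = \sum_{i=1}^N |v_t^i - v_t^{i-1}|$ (with $v_t^0 = v_t^N = 0$), observe that it is Lipschitz in $t$, and compute its a.e.~derivative. Using the convention $\sgn(0) = 0$ together with the standard fact that a differentiable function has vanishing derivative a.e.~on any of its level sets (which handles cells where $v_t^i = v_t^{i-1}$), one gets at a.e.~$t$ the expression $\frac{d}{dt}|v(t)|_{\BV} = \sum_{i=1}^N s_t^i(\dot v_t^i - \dot v_t^{i-1})$ with $s_t^i = \sgn(v_t^i - v_t^{i-1})$. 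An Abel summation, using $\dot v_t^0 = \dot v_t^N = 0$, recasts this as $\sum_{i=1}^{N-1}(s_t^i - s_t^{i+1})\dot v_t^i$, so only local extrema of the discrete profile contribute. I would then rerun the comparison from the maximum-principle step across all sign patterns $(s_t^i, s_t^{i+1})$: at a local maximum ($s_t^i - s_t^{i+1} > 0$) the form of $V$ forces $\dot{\dx}_t^i \ge 0$ and hence $\dot v_t^i \le 0$, while at a local minimum ($s_t^i - s_t^{i+1} < 0$) it forces $\dot v_t^i \ge 0$; in every case $(s_t^i - s_t^{i+1})\dot v_t^i \le 0$. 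Therefore $\frac{d}{dt}|v(t)|_{\BV} \le 0$ a.e., giving \eqref{eq:tv_dinimishing}. I expect the fiddly part to be the bookkeeping of the boundary terms and the careful a.e.~differentiation of the sum of absolute values; the sign analysis itself is a direct consequence of the convex/concave-envelope form of $V$.
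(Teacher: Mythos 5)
Your proposal is correct and takes essentially the same route as the paper: the same mass-conservation identity $\dot v_t^i = -v_t^i\,\dot{\dx}_t^i/\dx_t^i$, the same comparison of $V(v_t^{i-1},v_t^i)$ and $V(v_t^i,v_t^{i+1})$ at local extrema (the paper's inequality \eqref{eq:max_principle}), and the same chain of deductions linking the density bounds, the velocity bounds $a_{\min}\leq V\leq a_{\max}$, and the gap bounds. The only difference is bookkeeping in (iv): the paper extracts a subsequence of peaks and troughs and differentiates the resulting alternating sum, whereas you differentiate the full sum $\sum_i|v_t^i-v_t^{i-1}|$ a.e.\ and apply Abel summation with the $\sgn(0)=0$ convention --- a slightly more careful implementation of the same sign argument, which also makes the paper's ``straightforward to infer'' maximum-principle step explicit via the function $M(t)=\max_i v_t^i$.
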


\begin{proof}
    (i) The conservation of mass follows directly from \eqref{eq:value_ode}.

    (ii) We first claim that the local densities $(v^{i})$ satisfy
    \begin{equation} \label{eq:max_principle}
        \left\{
        \begin{aligned}
            & \dot{v}_t^i \leq 0 \qquad \text{if} \quad v_t^{i-1} \leq v_t^i \geq v_t^{i+1} \\
            & \dot{v}_t^i \geq 0 \qquad \text{if} \quad v_t^{i-1} \geq v_t^i \leq v_t^{i+1}
        \end{aligned}
        \right.
    \end{equation}
    for all $t \in (0, t')$ and $i \in \{1, \dots, N - 1\}$. Indeed, if $v^i_t = 0$ for some $t \in (0, t')$, then $v_t^i \equiv 0$ on $[0, t')$ due to \eqref{eq:value_ode}, and \eqref{eq:max_principle} trivially holds. Let therefore $v_t^i$ be strictly positive and assume that $v_t^{i-1} \leq v_t^i \geq v_t^{i+1}$ for some $t \in (0, t')$. Then
    \begin{equation*}
        \dot{x}_t^i = V(v_t^{i-1}, v_t^i) = \min_{v \in [v_t^{i-1}, v_t^i]} a(v) \leq a(v_t^i) \leq \max_{v \in [v_t^{i+1}, v_t^i]} a(v) = V\bigl(v_t^i, v_t^{i+1}\bigr) = \dot{x}_t^{i+1}
    \end{equation*}
    which implies that
    \begin{equation*} \label{eq:value_derivative_bound}
        \dot{v}_t^i = \frac{d}{dt} \frac{x_0^{i+1} - x_0^{i}}{x_{t}^{i+1} - x_{t}^{i}} v_0^{i} = - v_t^i \frac{\dot{x}_t^{i+1} - \dot{x}_t^i}{x_{t}^{i+1} - x_{t}^{i}} \leq 0
    \end{equation*}
    whenever $x_t^{i+1} > x_t^i$. The second inequality in \eqref{eq:max_principle} follows by a similar argument.

    It is now straightforward to infer from \eqref{eq:max_principle} that $0 \leq v^i_t \leq v_0^*$ for all $t \in (0, t')$. Since $x^i_t$ satisfies \eqref{eq:particle_ode}, we have moreover that
    \begin{equation*}
        x^i_0 + t a_{\min} \leq x^i_t \leq x^i_0 + t a_{\max}
    \end{equation*}
    for all $t \in (0, t')$ and $i \in \set{1, \dots, N}$, where we have used ${a_{\min} \leq V(v_t^{i-1}, v_t^i) \leq a_{\max}}$ on $(0, t')$. This yields $\dx_t^i \leq \dx_0^i + t(a_{\max} - a_{\min})$, which inserted in \eqref{eq:value_ode} proves~\eqref{eq:value_bounds}.

    (iii) Since $\dx^{i}_t = \frac{v^i_0}{v^i_t}\dx^{i}_0$, the inequality \eqref{eq:particle_bounds} is a direct consequence of $v^i_t \leq v_0^*$ and the first inequality from \eqref{eq:value_bounds}.

    (iv) For given $t \in (0, t')$, let $(w^i_t)_{i = 0}^{N'}$ be a subset of $(v_t^i)_{i = 0}^N$ such that either
    \begin{equation*}
        w_t^{j-1} \leq w_t^j \geq w_t^{j+1} \qquad \text{or} \qquad w_t^{j-1} \geq w_t^j \leq w_t^{j+1}
    \end{equation*}
    for all $i \in \set{1, \dots, N'-1}$, and moreover
    \begin{equation*} \label{eq:tv_simplification}
        |v(t)|_{\BV(\R)} = \sum_{i = 0}^{N-1} |v_t^{i+1} - v_t^i| = \sum_{i = 0}^{N'-1} |w_t^{i+1} - w_t^i|.
    \end{equation*}
    In other words, the tuple $(w^i_t)$ is a collection of peaks and troughs of $(v_t^i)$. Suppose further that $w_t^{i} \neq w_t^{i+1}$ for all $i \in \set{1, \dots, N'-1}$ (otherwise, one of the values could be removed from the tuple without changing the total variation). Let us further separate the index set $\{1, \dots, N'\}$ into two sets $P = \set{i\colon w_t^i < w_t^{i+1}}$ and $N = \set{i\colon w_t^i > w_t^{i+1}}$. Then the total variation of $v$ is
    \begin{equation*}
        |v(t)|_{\BV(\R)} = \sum_{i \in P} \bigl(w_t^{i+1} - w_t^i\bigr) - \sum_{i \in N} \bigl(w_t^{i+1} - w_t^i\bigr).
    \end{equation*}
    In view of \eqref{eq:max_principle} we have
    \begin{equation*}
        \frac{d}{dt} |v(t)|_{\BV(\R)} = \sum_{i \in P} \bigl(\dot{w}_t^{i+1} - \dot{w}_t^i\bigr) - \sum_{i \in N} \bigl(\dot{w}_t^{i+1} - \dot{w}_t^i\bigr) \leq 0,
    \end{equation*}
    which proves \eqref{eq:tv_dinimishing} upon integrating over $(0, t)$.
\end{proof}

\subsection{Existence and uniqueness} \label{subsec:existence_uniqueness}

Existence and uniqueness for \eqref{eq:particle_ode}--\eqref{eq:value_ode} depend on the regularity of the velocity function $V$. It is not Lipschitz on all of $\R$, so we restrict the class of solutions. For $\delta > 0$, let $\S_\delta^N$ be the collection of all tuples $(x^i)_{i = 1}^N$ and $(v^i)_{i = 1}^{N-1}$ such that
\begin{equation*}
    x^i < x^{i+1} \qquad \text{and} \qquad v^i = 0 \ \ \text{or}\ \ \delta \leq v^i \leq \frac{1}{\delta}
\end{equation*}
for all $i \in \set{1, \dots, N-1}$. Using the a priori properties from the previous section, we will show that local solutions of \eqref{eq:particle_ode}--\eqref{eq:value_ode} remain in $\S_\delta^N$ for some $\delta > 0$ which can be chosen independently of the interval of existence. The following lemma proves that $V$ is Lipschitz in this setting.

\begin{lemma} \label{lemma:velocity_regularity}
    Assume that $f$ is Lipschitz on $\R$. Then the particle velocity function $(v_l, v_r) \mapsto V(v_l, v_r)$ is locally Lipschitz on $(0, \infty)^2$, and similarly the functions ${v_r \mapsto V(0, v_r)}$ and $v_l \mapsto V(v_l, 0)$ are locally Lipschitz on $(0, \infty)$.
\end{lemma}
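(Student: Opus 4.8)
The plan is to reduce the statement to two elementary facts — that $a(v) = f(v)/v$ is locally Lipschitz on $(0,\infty)$, and that taking the minimum or maximum of a Lipschitz function over a varying compact interval is itself a Lipschitz operation — and then to glue the two branches of $V$ across the diagonal $\set{v_l = v_r}$ by a short convexity argument. First I would fix $\delta > 0$ and show $a$ is Lipschitz on $[\delta, 1/\delta]$. Writing
\[
    a(v) - a(w) = \frac{w\bigl(f(v) - f(w)\bigr) + f(w)(w - v)}{vw},
\]
one bounds the numerator by $\tfrac{2}{\delta}[f]_\lip|v-w|$, using $|f(w)| \le [f]_\lip|w| \le [f]_\lip/\delta$ (which holds since $f(0) = 0$) and $|w| \le 1/\delta$, while the denominator is at least $\delta^2$. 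This produces a Lipschitz constant $L_\delta$ for $a$ on $[\delta, 1/\delta]$.

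Next I would record the min/max fact: if $g$ is $L$-Lipschitz on a compact set containing two nonempty compact subintervals $I, I'$, then $|\min_I g - \min_{I'} g| \le L\, d_H(I, I')$, where $d_H$ denotes Hausdorff distance. Indeed, taking a minimizer $v^* \in I$ and a point $w \in I'$ with $|v^* - w| \le d_H(I, I')$ gives $\min_{I'} g \le g(w) \le g(v^*) + L|v^*-w| \le \min_I g + L\, d_H(I,I')$, and symmetrically; the same holds for $\max$. Since $d_H([p,q],[p',q']) = \max(|p-p'|, |q-q'|)$, both $(p,q) \mapsto \min_{[p,q]} a$ and $(p,q) \mapsto \max_{[p,q]} a$ are $L_\delta$-Lipschitz on $\set{\delta \le p \le q \le 1/\delta}$. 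On the box $[\delta, 1/\delta]^2$, the map $V$ equals $\min_{[v_l,v_r]} a$ on the (closed, convex) triangle $\set{v_l \le v_r}$ and $\max_{[v_r,v_l]} a$ on $\set{v_r \le v_l}$, and the two expressions agree on the diagonal. For points $P, Q$ in different triangles, the segment $[P,Q]$ meets the diagonal at some $R$ lying in both triangles, whence
\[
    |V(P) - V(Q)| \le |V(P) - V(R)| + |V(R) - V(Q)| \le L_\delta\bigl(|P - R| + |R - Q|\bigr) = L_\delta|P - Q|.
\]
As $\delta > 0$ was arbitrary, this yields local Lipschitz continuity of $V$ on $(0,\infty)^2$.

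For the one-variable statements, $V(0, v_r) = \min_{[0, v_r]} a$ and $V(v_l, 0) = \max_{[0, v_l]} a$ (using the continuous extension $a(0) = f'(0)$), with one endpoint fixed at $0$ and the moving endpoint in $[\delta, 1/\delta]$. Here I would argue directly via nestedness rather than through the Hausdorff bound: for $\delta \le v_r \le v_r' \le 1/\delta$, setting $M_2 = \min_{[v_r, v_r']} a$ we have $\min_{[0,v_r']} a = \min\bigl(\min_{[0,v_r]} a,\, M_2\bigr)$, so that
\[
    0 \le \min_{[0,v_r]} a - \min_{[0,v_r']} a \le \max\bigl(0,\, a(v_r) - M_2\bigr) \le L_\delta |v_r - v_r'|,
\]
the last step using only that $a$ is $L_\delta$-Lipschitz on $[v_r, v_r'] \subset [\delta, 1/\delta]$. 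The main obstacle is precisely this boundary case: because $f$ is merely Lipschitz with $f'(0)$ existing, $a$ need not be Lipschitz near the origin (it can have unbounded difference quotients there), so the general Hausdorff estimate, which would require control of $a$ at a minimizer possibly located at $0$, is unavailable. The nested-interval argument sidesteps this by transferring all the estimation to the moving endpoint, which stays bounded away from $0$; the symmetric computation with $\max$ handles $V(v_l, 0)$, completing the proof.
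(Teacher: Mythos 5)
Your proof is correct and follows essentially the same route as the paper's: local Lipschitz continuity of $a$ on compact subsets of $(0,\infty)$, stability of $\min$/$\max$ under perturbation of the interval of optimization, and a separate treatment of the zero-endpoint case in which all estimation is transferred to the moving endpoint, which stays away from the origin. The paper only sketches these steps (its proof ends with ``a detailed case analysis easily confirms these claims''); your write-up -- in particular the diagonal-gluing argument, the nested-interval estimate for $V(0,v_r)$, and the observation that $a$ need not be Lipschitz at the origin even though $f'(0)$ exists -- supplies exactly the details the paper omits.
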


\begin{proof}
    Since $f$ is Lipschitz on $\R$, the function $v \mapsto a(v) = f(v)/v$ is at least locally Lipschitz on $(0, \infty)$. The particle velocity $V(v_l, v_r)$ is defined using the minimum or the maximum of $a(v)$ over the interval between $v_l$ and $v_r$ (see~\eqref{eq:velocity}). Since $\min$ and $\max$ operations preserve Lipschitz continuity, the function $V$ inherits the local Lipschitz property of $a$ when both $v_l, v_r$ are bounded away from zero. When one argument is zero, e.g. $v_r \mapsto V(0, v_r)$, this equals $\min_{v \in [0, v_r]} a(v)$. Thus, the rate of change of $V(0, v_r)$ is bounded by the rate of change of $a$ around $v_r$. A similar argument applies to $v_l \mapsto V(v_l, 0) = \max_{v \in [0, v_l]} a(v)$. A detailed case analysis easily confirms these claims.
\end{proof}

With the Lipschitz regularity of the velocity function established, we can now prove the main existence and uniqueness result for the particle path scheme. The following theorem ensures that the scheme produces a unique solution, valid globally in time until the first collision of particles.

\begin{theorem} \label{thm:existence_uniqueness}
    For any set of initial conditions $(x_0^i)_{i = 1}^N$ and $(v_0^i)_{i = 1}^{N-1}$ generated as in \mbox{\ref{item:1}}, there exists a unique set of solutions $(x_t^i)_{i = 1}^{N}$ and $(v_t^i)_{i = 1}^{N-1}$ of the system \eqref{eq:particle_ode}--\eqref{eq:value_ode} on the interval $(0, t_1)$, where $t_1 \in (0, T]$ is a uniquely determined collision time given by \eqref{eq:collision_time}.
\end{theorem}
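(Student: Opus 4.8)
The plan is to reduce the coupled system \eqref{eq:particle_ode}--\eqref{eq:value_ode} to a closed autonomous ODE for the particle positions alone, and then invoke Cauchy--Lipschitz theory together with the a priori estimates of Proposition~\ref{prop:a_priori_properties}. Since \eqref{eq:value_ode} determines each local density from the positions via $v_t^i = \dx_0^i v_0^i / (x_t^{i+1}-x_t^i)$, the right-hand side of \eqref{eq:particle_ode} may be written as a vector field $F=(F^i)_{i=1}^N$ with $F^i(x)=V(v^{i-1}(x),v^i(x))$ on the open cone $\mathcal{O}=\{x\in\R^N: x^1<\dots<x^N\}$, using the conventions $v^0\equiv v^N\equiv 0$. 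The whole problem is thus to produce a unique solution curve in $\mathcal{O}$ and to control how it can exit $\mathcal{O}$.

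First I would record that, by the construction in \ref{item:1}, each density $v_t^i$ is either identically zero (when $v_0^i=0$) or strictly positive, and that $x\mapsto v^i(x)$ is smooth on $\mathcal{O}$. Combined with Lemma~\ref{lemma:velocity_regularity}, this shows that $F$ is locally Lipschitz on $\mathcal{O}$: for a particle bounded by two active intervals both arguments of $V$ lie in $(0,\infty)$; when exactly one neighbouring interval is empty we are in the boundary cases $V(0,\cdot)$ or $V(\cdot,0)$; and when both neighbouring intervals are empty, $F^i\equiv V(0,0)=f'(0)$ is constant, so the singularity of $V$ at the origin is never probed along a nonconstant path. Hence Picard--Lindelöf yields a unique maximal $C^1$ solution $t\mapsto x_t\in\mathcal{O}$ of \eqref{eq:particle_ode} with $x_0=(x_0^i)$ on a maximal interval $[0,t_{\max})$, and defining $v_t^i$ through \eqref{eq:value_ode} gives a solution in the sense of Definition~\ref{def:system_solution} on each $(0,t')$ with $t'<t_{\max}$.

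Next I would use Proposition~\ref{prop:a_priori_properties} to pin down a uniform confinement. For any such solution the separation bound \eqref{eq:particle_bounds} gives $\dx_t^i\geq (v_0^i/v_0^*)\dx_0^i$, strictly positive for every active interval; thus active intervals never collapse, and \eqref{eq:value_bounds} keeps the corresponding densities in the fixed compact interval $[c,v_0^*]$ with $c=\min_{i:\,v_0^i>0}\dx_0^i v_0^i/(\dx_0^i+T(a_{\max}-a_{\min}))>0$. Choosing $\delta\in(0,1]$ with $\delta\leq c$ and $1/\delta\geq v_0^*$ — a choice depending only on the data $(x_0^i),(v_0^i)$ and on $T$, not on $t'$ — every solution remains in $\S_\delta^N$ for all its times of existence, and on $\S_\delta^N$ the velocity $V$ is Lipschitz with a constant depending only on $\delta$ by Lemma~\ref{lemma:velocity_regularity}. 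This uniform Lipschitz constant, together with the velocity bound $a_{\min}\leq F^i\leq a_{\max}$ which rules out finite-time escape to infinity, lets the solution be continued with steps of uniform length up to the boundary of $\mathcal{O}$.

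Finally I would identify the exit mechanism. Since active intervals stay bounded below, the maximal solution can leave $\mathcal{O}$ only through the collapse of an empty interval, i.e.\ a genuine collision $x_t^i=x_t^{i+1}$ with $v_0^i=0$; because the velocities remain bounded and Lipschitz up to that instant, $x_t$ extends continuously to the first such time, which is exactly $t_1$ as defined in \eqref{eq:collision_time}, or else no collision occurs before $T$ and $t_1=T$. Strict positivity of the initial gaps $\dx_0^i>0$ together with the uniform velocity bound forces $t_1>0$, so $t_1\in(0,T]$, and $t_1$ is uniquely determined because the solution is. Uniqueness of $(x_t^i),(v_t^i)$ on $(0,t_1)$ then follows from the uniqueness part of Picard--Lindelöf in the Lipschitz region $\S_\delta^N$, to which every solution of Definition~\ref{def:system_solution} is confined. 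The main obstacle in this program is the continuation step: the field $F$ is only locally Lipschitz and degenerates where densities vanish or blow up, so the crux is to convert the a priori estimates of Proposition~\ref{prop:a_priori_properties} into a confinement with a $\delta$ independent of the existence interval, thereby simultaneously furnishing a uniform Lipschitz constant and confining all collisions to the empty intervals.
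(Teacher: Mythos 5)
Your proposal is correct and follows essentially the same route as the paper: close the system into an ODE for the positions alone, use Lemma~\ref{lemma:velocity_regularity} for local Lipschitz continuity of the resulting vector field, apply Cauchy--Lipschitz for a unique local solution, and then convert the a priori bounds of Proposition~\ref{prop:a_priori_properties} into confinement in $\S_\delta^N$ with $\delta$ independent of the existence interval, so that the solution can be continued up to the collision time $t_1$. Your extra details (the case analysis for empty neighbouring intervals and the identification of collisions with empty intervals via \eqref{eq:particle_bounds}) are elaborations of steps the paper treats implicitly or defers, not a different argument.
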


\begin{proof}
    We will prove that for any $(x_0^i)$, $(v_0^i)$ in $S^N_{\delta_0}$, there is a unique solution $(x_t^i)$, $(v_t^i)$ of \eqref{eq:particle_ode}--\eqref{eq:value_ode} on $(0, t_1)$ which belongs to $S^N_{\delta_1}$, where
    \begin{equation} \label{eq:delta_1}
        0 \leq \delta_1 = \delta_0 \min_{i \in \set{1, N-1}} \frac{\dx_0^i}{\dx_0^i + T(a_{\max} - a_{\min})}.
    \end{equation}
    To this end, consider $(x_0^i)$, $(v_0^i)$ in $S^N_{\delta_0}$ for some $\delta_0 > 0$. The equations \eqref{eq:particle_ode}--\eqref{eq:value_ode} can be written compactly as
    \begin{equation*}
        \dot{x}_t^i = V(v_t^{i-1}, v_t^i) = V\biggl(\frac{\dx_0^{i-1}}{\dx_t^{i-1}} v_0^{i-1}, \frac{\dx_0^i}{\dx_t^i} v_0^i, \biggr) \eqqcolon F^i\bigl((x_t^i)_{i=1}^N; (x_0^i)_{i=1}^N, (v_0^i)_{i=1}^{N-1}\bigr)
    \end{equation*}
    for $i \in \set{1, \dots, N}$. The right-hand side is a function $F^i\colon \R^N \to \R^N$ which takes parameters $(x_0^i)$ and $(v_0^i)$. In light of Lemma~\ref{lemma:velocity_regularity}, we see that $F^i$ is locally Lipschitz around the initial condition $(x_0^i)$ for all $i \in \set{1, \dots, N}$. This gives a unique local solution $(x_t^i)$, $(v_t^i)$ on an interval $(0, t')$ for some $t' > 0$. Since $x_0^1 < x_0^2 < \dots < x_0^N$ and the solution is continuous with respect to time, we may assume that $x_t^1 < x_t^2 < \dots < x_t^N$ for all $(0, t')$. Moreover, by (ii) from Proposition~\ref{prop:a_priori_properties} the local densities $(v_t^i)$ which are nonzero are bounded from below by a constant as in \eqref{eq:delta_1}, independently of the existence time $t'$. Thus, the local solution belongs to $S^N_{\delta_1}$.
    
    To obtain a global solution, it suffices to iterate this argument finitely many times up to a possible collision time $t_1$ when $x_{t_1}^i = x_{t_1}^{i+1}$ for some $i \in \set{1, \dots, N}$.
\end{proof}

We have so far considered solutions only up to the first collision time $t_1$, but the results of this section extend to the whole interval $[0, T]$. In particular, the bounds \eqref{eq:max_principle} hold for all $t \in [0, T] \setminus \mathcal{T}_c$, which implies that the properties from Proposition~\ref{prop:a_priori_properties} are valid for all $t \in [0, T]$. Since particles and local densities are removed at collision times, the total variation after a collision can only have decreased, guaranteeing that the total variation estimate \eqref{eq:tv_dinimishing} holds globally. Finally, an analogue of Theorem~\ref{thm:existence_uniqueness} holds on each interval $(t_j, t_{j+1})$, ensuring existence and uniqueness of a global solution of \eqref{eq:particle_ode}--\eqref{eq:value_ode}.

\section{PDE formulation} \label{sec:pde_formulation}

In this section, we will show that the approximation $v$ from \eqref{eq:approx_sol_global} is the weak solution of a continuity equation of the form \eqref{eq:cont_eq}, with a velocity field $A$ given by the linear interpolation of particle velocities. Ultimately, this formulation allows us to prove that $v$ satisfies an approximate entropy inequality.

Recall from Section~\ref{sec:existence_uniqueness} that the particle path scheme yields a unique global solution defined on $[0, T]$. This solution consists of a finite set of collision times $(t_j)_{j \geq 0} \subset [0, T]$ with corresponding particle paths $(x_t^\pi)$ and local densities $(v_t^\pi)$ evolving according to \eqref{eq:particle_ode}--\eqref{eq:value_ode} between collisions.

\subsection{Approximate continuity equation} \label{subsec:continuity_equation}

We begin by showing that $v$ is a weak solution of a continuity equation. Define
\begin{equation} \label{eq:velocity_interpolation_global}
    \begin{aligned}
        A\bigl((x_t^\pi), (v_t^\pi); x, t\bigr) & \coloneqq \frac{x_t^{\pi_j(i+1)}-x}{x_t^{\pi_j(i+1)} - x_t^{\pi_j(i)}} V\bigl(v_t^{\pi_j(i-1)}, v_t^{\pi_j(i)}\bigr) \\
        & \quad + \frac{x-x_t^{\pi_j(i)}}{x_t^{\pi_j(i+1)} - x_t^{\pi_j(i)}} V\bigl(v_t^{\pi_j(i)}, v_t^{\pi_j(i+1)}\bigr)
    \end{aligned}
\end{equation}
for $t \in (t_j, t_{j+1})$ and $x \in \bigl[x_t^{\pi_j(i)}, x_t^{\pi_j(i+1)}\bigr]$. We will mainly write this as $A(x, t)$, but included $(x^\pi)$ and $(v^\pi)$ as arguments in the definition above to stress this dependence. The function $A$ is a global-in-time piecewise linear interpolation of particle velocities $(v^\pi)$, with interpolation nodes given by particle positions $(x^\pi)$.

\begin{proposition} \label{prop:weak_solution_cont_eq}
    Let $v$ be constructed by the particle path scheme and defined as in \eqref{eq:approx_sol_global}. Then it satisfies
    \begin{equation} \label{eq:weak_continuity_unshifted}
        \int_0^T \int_\R \bigl(\partial_t \varphi(x, t) + A(x, t) \partial_x \varphi(x, t) \bigr) v(x, t)\, dx\,dt + \int_\R \varphi(x, 0) v_0(x)\, dx = 0
    \end{equation}
    for all $\varphi \in C^\infty_c(\R \times [0, T))$.
\end{proposition}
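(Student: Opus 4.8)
The plan is to verify the weak formulation \eqref{eq:weak_continuity_unshifted} directly from the definition of $v$, working on each time interval $(t_j, t_{j+1})$ between collisions where $v$ is a well-behaved piecewise-constant function with smoothly moving jump locations. The key structural fact I would exploit is that the velocity field $A$ was chosen precisely so that, on each interval $(x_t^{\pi_j(i)}, x_t^{\pi_j(i+1)})$, its value at the endpoints matches the particle velocities $\dot{x}_t^{\pi_j(i)}$ and $\dot{x}_t^{\pi_j(i+1)}$. This alignment is what should make the boundary terms telescope correctly.

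First I would fix $\varphi \in C^\infty_c(\R \times [0,T))$ and split the time integral as $\int_0^T = \sum_j \int_{t_j}^{t_{j+1}}$, which is a finite sum since there are finitely many collisions. On a single interval $(t_j, t_{j+1})$, the function $v(\cdot, t)$ equals $v_t^{\pi_j(i)}$ on the moving cell $(x_t^{\pi_j(i)}, x_t^{\pi_j(i+1)})$, so I would write the spatial integral $\int_\R (\partial_t \varphi + A \partial_x \varphi) v\, dx$ as a sum over cells $i$. The natural tool here is the Reynolds transport theorem (or equivalently, differentiation of a moving-domain integral): for each cell I would compute $\frac{d}{dt} \int_{x_t^{\pi_j(i)}}^{x_t^{\pi_j(i+1)}} \varphi(x,t) v_t^{\pi_j(i)}\, dx$. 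Differentiating this product, the contribution splits into a term from $\partial_t$ of the integrand, boundary terms from the moving endpoints (carrying factors $\dot{x}_t^{\pi_j(i+1)}$ and $\dot{x}_t^{\pi_j(i)}$), and a term from $\dot{v}_t^{\pi_j(i)}$. By mass conservation (Proposition \ref{prop:a_priori_properties}(i)), the quantity $\dx_t^{\pi_j(i)} v_t^{\pi_j(i)}$ is constant in time, which should cause the $\dot v$ term and part of the endpoint terms to cancel against the $A \partial_x \varphi$ contribution.

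The heart of the matter, and what I expect to be the main obstacle, is reconciling the $A \partial_x \varphi$ term with the boundary terms produced by the moving cells. Since $A$ is linear on each cell with endpoint values equal to the adjacent particle velocities, I would integrate $\int A \partial_x \varphi\, v\, dx$ by parts on each cell; because $v$ is constant per cell, this reduces to boundary evaluations $A(x_t^{\pi_j(i\pm 1)}, t)\varphi v$ minus $\int \partial_x A \cdot \varphi\, v\, dx$, and the linearity of $A$ makes $\partial_x A$ a constant per cell expressible through the particle velocity differences. The delicate bookkeeping is that each interior particle position $x_t^{\pi_j(i)}$ is shared as the right endpoint of one cell and the left endpoint of the next; I must check that the two boundary contributions there combine using the \emph{single} velocity $\dot{x}_t^{\pi_j(i)} = V(v_t^{\pi_j(i-1)}, v_t^{\pi_j(i)})$, which is exactly where the continuity of the interpolation $A$ at nodes is essential, so no jump-in-$A$ terms appear.

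Finally I would sum the per-cell identities over $i$ and then over $j$. Integrating the transport-theorem identity over $(t_j, t_{j+1})$ converts the total time derivative into boundary-in-time terms $\int_\R \varphi v\, dx$ evaluated at $t_{j+1}^-$ and $t_j^+$. Summing over $j$, these telescope: at each interior collision time the mass is preserved (deleting the leftmost particle in a collision does not change $\int_\R v\, dx$ locally, since colliding cells have zero width in the limit), the terminal term at $T$ vanishes because $\varphi$ has compact support in time, and the only surviving boundary term is $-\int_\R \varphi(x,0) v_0(x)\, dx$ at $t=0$. Combined with the cancellations established above, this yields \eqref{eq:weak_continuity_unshifted}. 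The one point requiring care in this last step is verifying that the collision times contribute nothing: I would argue that as $t \to t_{j+1}^-$ the colliding cells shrink to zero width so their contributions to $\int_\R \varphi v\, dx$ vanish, and the re-indexing via $\pi_{j+1}$ leaves the remaining mass distribution continuous across $t_{j+1}$.
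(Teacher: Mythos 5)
Your proposal is correct and follows essentially the same route as the paper's proof: a per-cell computation between collision times that combines mass conservation, the particle ODE, and the fact that $A$ interpolates the particle velocities at the nodes, with integration by parts on each cell producing exactly the cancellations you describe. The only cosmetic differences are that the paper works with separable test functions $\varphi = \gamma(t)\vartheta(x)$ (integrating by parts in time and computing $\partial_t\bigl(v_t^i \int_{x_t^i}^{x_t^{i+1}}\vartheta\,dx\bigr)$, which is your transport-theorem identity) and concludes by density, whereas you treat general $\varphi$ directly; and that you make explicit the gluing across collision times --- valid since colliding cells have vanishing width and, by the maximum principle, bounded density --- which the paper leaves implicit in its reduction to the interval $(0,t_1)$.
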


\begin{remark}
    We will treat \eqref{eq:weak_continuity_unshifted} as a linear equation with a prescribed velocity $A$. However, since both $A$ and $v$ depend on the underlying output $(x_t^\pi)$ and $(v_t^\pi)$ of the particle path scheme, the equation can also be viewed as nonlinear. From this perspective, it is also a nonlocal equation, since $A$ is a piecewise linear interpolation with interpolation nodes $(x_t^\pi)$.
\end{remark}

\begin{proof}
    There can only be finitely many collisions of particles, so it suffices to prove that the equation holds for $v$ given by \eqref{eq:approx_sol_local} on the interval $(0, t_1)$, for all test functions $\varphi \in C^\infty_c(\R \times [0, t_1])$. To that end, let $\varphi \in C^\infty_c(\R \times [0, t_1])$ such that $\varphi = \gamma \vartheta$ for $\vartheta \in C^\infty_c(\R)$ and $\gamma \in C^\infty_c([0, t_1])$. Then
    \begin{equation} \label{eq:cont_eq_split}
        \begin{aligned}
            \int_0^{t_1} \int_\R v \partial_t \varphi\, dx\,dt & = \sum_{i=0}^{N} \int_0^{t_1} v_t^i \partial_t \gamma(t) \biggl(\int_{x_t^i}^{x_t^{i+1}} \vartheta(x)\, dx \biggr)\, dt \\
            & = \sum_{i = 0}^N \gamma(t_1) v_{t_1}^i \int_{x_{t_1}^i}^{x_{t_1}^{i+1}} \vartheta(x)\, dx - \sum_{i = 0}^N\gamma(0) v_0^i \int_{x_0^i}^{x_0^{i+1}} \vartheta(x)\, dx \\
            & \quad - \sum_{i = 0}^N \int_0^{t_1} \gamma(t) \partial_t \biggl(v_t^i \int_{x_t^i}^{x_t^{i+1}} \vartheta(x)\, dx \biggr)\, dt
        \end{aligned}
    \end{equation}
    where we have used integration by parts in the last equality. Adopting the shorthand notation $V_t^i = V(v_t^{i-1}, v_t^{i})$, the temporal derivative in the last term can be written as
    \begin{equation*}
        \begin{aligned}
            & \partial_t \biggl(v_t^i \int_{x_t^i}^{x_t^{i+1}} \vartheta(x)\, dx\biggr) \\
            & = -v_t^i \frac{V_t^{i+1} - V_t^i}{x_t^{i+1} - x_t^i} \int_{x_t^i}^{x_t^{i+1}} \vartheta(x)\, dx + v_t^i \bigl[\vartheta(x_t^{i+1}) V_t^{i+1} - \vartheta(x_t^i) V_t^i \bigr] \\
            & = v_t^i \biggl[V_t^i \biggl(\int_{x_t^i}^{x_t^{i+1}} \frac{\vartheta(x)}{x_t^{i+1} - x_t^{i}}\, dx - \vartheta(x_t^i) \biggr) - V_t^{i+1} \biggl(\int_{x_t^i}^{x_t^{i+1}} \frac{\vartheta(x)}{x_t^{i+1} - x_t^{i}}\, dx - \vartheta(x_t^{i+1}) \biggr)\biggr] \\
            & = v_t^i \biggl( V_t^i \int_{x_t^i}^{x_t^{i+1}} \frac{x_t^{i+1} - x}{x_t^{i+1} - x_{i}} \partial_x \vartheta(x)\, dx + V_t^{i+1} \int_{x_t^i}^{x_t^{i+1}} \frac{x - x_t^i}{x_t^{i+1} - x_{i}} \partial_x \vartheta(x)\, dx\biggr) \\
            & = v_t^i \int_{x_t^i}^{x_t^{i+1}} A(x, t) \partial_x \vartheta(x)\, dx,
        \end{aligned}
    \end{equation*}
    for all $i \in \set{1, \dots, N-1}$ (recall that $v_t^0 = v_t^N = 0$ for all $t$). Here, we have used \eqref{eq:particle_ode} in the first equality and integration by parts in the third equality. Inserting this in \eqref{eq:cont_eq_split} yields
    \begin{equation*}
        \begin{aligned}
            \int_0^{t_1} \int_\R v \partial_t \varphi\, dx\,dt & = \sum_{i = 0}^N \gamma(t_1) v_{t_1}^i \int_{x_{t_1}^i}^{x_{t_1}^{i+1}} \vartheta(x)\, dx - \sum_{i = 0}^N\gamma(0) v_0^i \int_{x_0^i}^{x_0^{i+1}} \vartheta(x)\, dx\\
            & \quad - \sum_{i = 0}^N \int_0^{t_1} \gamma(t) v_t^i \biggl(\int_{x_t^i}^{x_t^{i+1}} A(x, t) \partial_x \vartheta(x)\, dx \biggr) dt \\
            & = \int_\R v(t_1) \varphi(t_1)\, dx - \int_\R v_0 \varphi(0)\, dx - \int_0^{t_1} \int_{\R} v A \partial_x \varphi\, dx\,dt.
        \end{aligned}
    \end{equation*}
    The general case now follows by approximation of arbitrary $\varphi \in C_c^\infty(\R \times [0, t_1])$ by finite linear combinations of $\gamma(t) \vartheta(x)$.
\end{proof}

Proposition~\ref{prop:weak_solution_cont_eq} implies some temporal regularity of the function $v$.

\begin{lemma} \label{lemma:temporal_modulus}
    Let $v$ be given by \eqref{eq:approx_sol_global}. Then
    \begin{equation} \label{eq:temporal_continuity}
        \norm{v(t) - v(s)}_{L^1(\R)} \leq 4 [f]_\lip |v_0|_{\BV(\R)} |t-s|
    \end{equation}
    for all $s, t \in [0, T]$.
\end{lemma}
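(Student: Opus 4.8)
The plan is to exploit the continuity-equation structure established in Proposition~\ref{prop:weak_solution_cont_eq}: for a conservative weak solution the temporal $L^1$-modulus is controlled by the spatial total variation of its flux $J \coloneqq A v$. Concretely, I fix $0 \le s < t \le T$ and test the weak formulation \eqref{eq:weak_continuity_unshifted} against functions $\gamma(\tau)\vartheta(x)$ with $\gamma$ approximating $\indicator_{[s,t]}$ and $\vartheta \in C_c^\infty(\R)$, $\norm{\vartheta}_{L^\infty} \le 1$. This produces the identity
\begin{equation*}
    \int_\R \vartheta\,(v(t) - v(s))\,dx = \int_s^t \int_\R A v\,\partial_x \vartheta\,dx\,d\tau.
\end{equation*}
For each $\tau \notin \mathcal{T}_c$ the function $A(\tau)v(\tau)$ has compact support and bounded variation, so integrating by parts in $x$ and using $\norm{\vartheta}_{L^\infty} \le 1$ bounds the inner integral by $|A(\tau)v(\tau)|_{\BV(\R)}$. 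Taking the supremum over admissible $\vartheta$ and invoking the dual characterization of the $L^1$-norm then yields
\begin{equation*}
    \norm{v(t) - v(s)}_{L^1(\R)} \le \int_s^t |A(\tau) v(\tau)|_{\BV(\R)}\,d\tau,
\end{equation*}
so it suffices to bound the flux variation $|A(\tau)v(\tau)|_{\BV(\R)}$ by $4[f]_\lip |v_0|_{\BV(\R)}$ for a.e.\ $\tau$.

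I would then compute this variation from the explicit structure of $J = Av$. Since $A$ is continuous and piecewise linear with nodes at the particle positions while $v$ is piecewise constant, $J$ is linear on each interval between consecutive particles and jumps only at the particle positions. Writing $V_\tau^i = \V(v_\tau^{i-1}, v_\tau^i)$, the endpoint values of $J$ on the $i$-th interval are $V_\tau^i v_\tau^i$ and $V_\tau^{i+1}v_\tau^i$, while the jump across the shared node equals $V_\tau^{i+1}(v_\tau^{i+1} - v_\tau^i)$. Hence
\begin{equation*}
    |A(\tau)v(\tau)|_{\BV(\R)} = \sum_{\text{nodes}} |V_\tau^i|\,|v_\tau^i - v_\tau^{i-1}| + \sum_{\text{intervals}} |v_\tau^i|\,|V_\tau^{i+1} - V_\tau^i|.
\end{equation*}
The node sum is immediate: since $f(0) = 0$ and $f$ is Lipschitz, $|a(v)| = |f(v)|/|v| \le [f]_\lip$, so every interpolation value satisfies $|V_\tau^i| \le [f]_\lip$, and the node sum is at most $[f]_\lip|v(\tau)|_{\BV(\R)}$.

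The crux is the interval sum, where one must control how much the flux tilts across each interval. The key algebraic identity is that for nonnegative states $u,w$ one has $f(w) - \V(u,w)\,w = w\bigl(a(w) - a(\xi)\bigr) = \bigl(f(w) - f(\xi)\bigr) - a(\xi)(w - \xi)$, where $\V(u,w) = a(\xi)$ for some $\xi$ between $u$ and $w$ by definition \eqref{eq:velocity}. Bounding each term by $[f]_\lip|w - \xi| \le [f]_\lip|w-u|$ shows that $V_\tau^i v_\tau^i$ and $V_\tau^{i+1} v_\tau^i$ both approximate $f(v_\tau^i)$ with error controlled by the neighbouring density gaps $|v_\tau^i - v_\tau^{i-1}|$ and $|v_\tau^i - v_\tau^{i+1}|$. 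Subtracting the two estimates bounds $|v_\tau^i|\,|V_\tau^{i+1} - V_\tau^i|$ by a constant multiple of $[f]_\lip\bigl(|v_\tau^i - v_\tau^{i-1}| + |v_\tau^i - v_\tau^{i+1}|\bigr)$; on summing, each density gap is counted only a bounded number of times, giving a bound of the form $C[f]_\lip|v(\tau)|_{\BV(\R)}$ for the interval sum. Collecting the node and interval contributions and inserting the diminishing-total-variation estimate $|v(\tau)|_{\BV(\R)} \le |v_0|_{\BV(\R)}$ from Proposition~\ref{prop:a_priori_properties}(iv) gives the desired pointwise-in-$\tau$ bound, and integration over $(s,t)$ yields \eqref{eq:temporal_continuity}. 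I expect the interval sum to be the main obstacle, both in establishing the identity relating the interpolated velocities to $f$ and in the careful bookkeeping needed to reach the sharp constant. A minor additional point is that, since the interval deleted at each collision carries zero mass, $\tau \mapsto v(\tau)$ is $L^1$-continuous across the finitely many collision times, so the integrated bound extends to all $s,t \in [0,T]$ even though the flux variation is defined only off $\mathcal{T}_c$.
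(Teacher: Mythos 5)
Your proposal follows essentially the same two-step strategy as the paper's proof: first reduce, via the weak formulation \eqref{eq:weak_continuity_unshifted} and $L^1$--$L^\infty$ duality, to the bound $\norm{v(t)-v(s)}_{L^1(\R)} \leq \int_s^t |A(\tau)v(\tau)|_{\BV(\R)}\,d\tau$; then compute the variation of $Av$ explicitly as node jumps plus interval tilts, bound the node sum by $[f]_\lip |v(\tau)|_{\BV(\R)}$ using $|a(v)| = |f(v)-f(0)|/|v| \leq [f]_\lip$, and conclude with the TV-diminishing property from Proposition~\ref{prop:a_priori_properties}(iv). This is exactly the paper's decomposition, including the key point of never invoking a Lipschitz bound on $a$ itself (which would fail near $v=0$).

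The one place you fall short of the statement is the constant. Your interval-tilt estimate pivots through $f(v_\tau^i)$: each of the two legs $|f(v_\tau^i)-V_\tau^i v_\tau^i|$ and $|f(v_\tau^i)-V_\tau^{i+1} v_\tau^i|$ costs $2[f]_\lip$ times a density gap, so the interval sum comes out bounded by $4[f]_\lip|v(\tau)|_{\BV(\R)}$ and the total by $5[f]_\lip|v_0|_{\BV(\R)}$, not the constant $4$ claimed in \eqref{eq:temporal_continuity} --- precisely the ``bookkeeping'' issue you flag but leave open. The paper instead pivots through the chord values: writing $V_\tau^i = a(\tilde v_\tau^i)$ with $\tilde v_\tau^i$ between $v_\tau^{i-1}$ and $v_\tau^i$, and suppressing the time subscript, it splits
\begin{equation*}
    |a(\tilde v^i)v^i - a(\tilde v^{i+1})v^i| \leq |a(\tilde v^i)|\,|v^i - \tilde v^i| + |f(\tilde v^i) - f(\tilde v^{i+1})| + |a(\tilde v^{i+1})|\,|\tilde v^{i+1} - v^i|,
\end{equation*}
so each of the three terms carries only a single factor $[f]_\lip$; moreover the middle sum $\sum_i |\tilde v^i - \tilde v^{i+1}|$ is at most $|v(\tau)|_{\BV(\R)}$, because inserting each $\tilde v^i$ between $v^{i-1}$ and $v^i$ does not change the variation of the sequence and passing to a subsequence cannot increase it. This yields $3[f]_\lip|v(\tau)|_{\BV(\R)}$ for the interval sum and hence the stated constant $4$. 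The discrepancy is immaterial downstream (in the proof of Theorem~\ref{thm:main} the lemma is only used to send $\tau \to 0$, so any finite constant suffices), but to prove the lemma as stated you need this sharper three-term splitting rather than the two-leg estimate through $f(v_\tau^i)$.
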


\begin{proof}
    Since $v$ satisfies \eqref{eq:weak_continuity_unshifted}, for all $\vartheta \in C^\infty_c(\R)$ the function $t \mapsto \int_{\R} v \vartheta\, dx$ is absolutely continuous on $(0, T)$ with weak derivative $\int_{\R} A v \partial_x \vartheta\, dx$. Equivalently stated,
    \begin{equation*}
        \int_{\R} v(x, t) \vartheta(x)\, dx - \int_{\R} v(x, s) \vartheta(x)\, dx = \int_s^{t} \int_{\R} A(x, r) v(x, r) \partial_x \vartheta(x) \,dx\,dr
    \end{equation*}
    for all $t < s$. Using integration by parts on the right-hand side and taking the supremum over all $|\vartheta| \leq 1$ yields
    \begin{equation*}
        \int_{\R} \bigl|v(x, t) - v(x, s)\bigr| \, dx = \int_s^{t} \int_{\R} \bigl|\partial_x \bigl(A(x, r) v(x, r)\bigr)\bigr| \,dx\,dr,
    \end{equation*}
    from which we infer
    \begin{equation*}
        \norm{v(t) - v(s)}_{L^1(\R)} \leq \sup_{r \in (s, t)} |A(r) v(r)|_{\BV(\R)} |t-s|.
    \end{equation*}
    It remains to estimate the total variation of $Av$. It is a piecewise linear function with breakpoints at $(x^i)$ and values
    \begin{equation*}
        \lim_{x \to (x_t^i)^-} A(x, t) v(x, t) = a(\tilde{v}_t^i) v_t^{i-1}, \qquad \lim_{x \to (x_t^i)^+} A(x, t) v(x, t) = a(\tilde{v}_t^i) v_t^{i},
    \end{equation*}
    for all $i \in \set{1, \dots, N}$, where $\tilde{v}_t^i \in [v_t^{i-1}, v_t^i]$ is defined as the point at which $a$ attains the value $V(v_t^{i-1}, v_t^{i})$ (note that $\tilde{v}_t^i$ is not necessarily unique). Thus, the total variation is
    \begin{equation} \label{eq:tv_estimate_start}
            |A(t) v(t)|_{\BV(\R)} = \sum_{i = 1}^N |a(\tilde{v}_t^{i})v_t^{i-1} - a(\tilde{v}_t^{i})v_t^{i}| + \sum_{i = 1}^{N-1} |a(\tilde{v}_t^{i})v_t^{i} - a(\tilde{v}_t^{i+1})v_t^{i}|.
    \end{equation}
    Since 
    \begin{equation*}
        |a(v)| = \frac{|f(v)|}{|v|} = \frac{|f(v) - f(0)|}{|v - 0|}\leq [f]_{\lip}
    \end{equation*}
    for all $v \in \R$, the first sum in \eqref{eq:tv_estimate_start} can be bounded by
    \begin{equation*}
        \sum_{i = 1}^N |a(\tilde{v}_t^{i})v_t^{i-1} - a(\tilde{v}_t^{i})v_t^{i}| \leq [f]_{\lip} \sum_{i = 1}^N |v_t^{i-1} - v_t^{i}| \leq [f]_{\lip} |v(t)|_{\BV(\R)}.
    \end{equation*}
    For the terms in the second sum of \eqref{eq:tv_estimate_start}, we have
    \begin{equation*}
        \begin{aligned}
            & |a(\tilde{v}_t^{i})v_t^{i} - a(\tilde{v}_t^{i+1})v_t^{i}| \\
            & \quad \leq |a(\tilde{v}_t^{i})v_t^{i} - a(\tilde{v}_t^{i})\tilde{v}_t^{i}| + |a(\tilde{v}_t^{i})\tilde{v}_t^{i} - a(\tilde{v}_t^{i+1})\tilde{v}_t^{i+1}| + |a(\tilde{v}_t^{i+1})\tilde{v}_t^{i+1} - a(\tilde{v}_t^{i+1})v_t^{i}| \\
            & \quad \leq |a(\tilde{v}_t^{i})||v_t^{i} - \tilde{v}_t^{i}| + |f(\tilde{v}_t^{i}) - f(\tilde{v}_t^{i+1})| + |a(\tilde{v}_t^{i+1})||\tilde{v}_t^{i+1} - v_t^{i}| \\
            & \quad \leq [f]_{\lip} \bigl(|v_t^{i} - \tilde{v}_t^{i}| + |\tilde{v}_t^{i} - \tilde{v}_t^{i+1}| + |\tilde{v}_t^{i+1} - v_t^{i}| \bigr),
        \end{aligned}
    \end{equation*}
    which gives
    \begin{equation*}
        \begin{aligned}
            \sum_{i = 1}^{N-1} |a(\tilde{v}_t^{i})v_t^{i} - a(\tilde{v}_t^{i+1})v_t^{i}| & \leq [f]_\lip \sum_{i = 1}^{N-1} \bigl(|v_t^{i} - \tilde{v}_t^{i}| + |\tilde{v}_t^{i} - \tilde{v}_t^{i+1}| + |\tilde{v}_t^{i+1} - v_t^{i}| \bigr) \\
            & \leq [f]_\lip \sum_{i = 1}^{N-1} \bigl(|v_t^{i} - v_t^{i-1}| + |\tilde{v}_t^{i} - \tilde{v}_t^{i+1}| + |v_t^{i+1} - v_t^{i}| \bigr) \\
            & \leq 3 [f]_\lip |v(t)|_{\BV(\R)}.
        \end{aligned}
    \end{equation*}
    Plugging this into \eqref{eq:tv_estimate_start} and using that the total variation of $v$ is nonincreasing from part (iv) of Proposition~\ref{prop:a_priori_properties}, we obtain \eqref{eq:temporal_continuity}.
\end{proof}

Towards an entropy-like inequality for $v$, we consider the vertically shifted function $v_k \coloneqq v-k$ for constants $k \in \R$. Set also $v_{0, k} \coloneqq v_0 - k$. Since $v$ is a weak solution of \eqref{eq:weak_continuity_unshifted}, it follows easily that $v_k$ is a weak solution to
\begin{equation} \label{eq:cont_eq_shifted}
    \left\{
    \begin{aligned}
        & \partial_t v_k + \partial_x \bigl(A v_k\bigr) = -k \partial_x A \\
        & v_k(0) = v_{0, k}
    \end{aligned}
    \right.
\end{equation}
in the sense of \eqref{eq:weak_general_continuity}. Note that $\partial_x A$ is finite except at collision times $t_j \in \mathcal{T}_c$, and satisfies
\begin{equation*}
    \esssup_{t \in (0, T)} \int_\R |\partial_x A(x, t)|\, dx  = \esssup_{t \in (0, T)} |A(t)|_{\BV(\R)} \leq (N+1) a_{\max},
\end{equation*}
where we recall that $a_{\max} = \max_{v \in [0, v_0^*]} a(v)$. The next lemma shows that the equation \eqref{eq:cont_eq_shifted} can be studied within the one-sided Lipschitz framework from Section~\ref{subsec:continuity_eq}.

\begin{lemma}
    The function $x \mapsto A(x, t)$ is continuous on $\R$ for all $t \in [0, T] \setminus \mathcal{T}_c$. Furthermore, it is one-sided Lipschitz (cf.~\eqref{eq:one_sided_lip}) for all $t \in [0, T]$.
\end{lemma}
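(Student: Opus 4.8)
The plan is to exploit the piecewise-linear structure of $x \mapsto A(x,t)$ and handle the two claims separately. For continuity, fix $t \in [0,T]\setminus\mathcal{T}_c$, so that the surviving particles are strictly ordered and every denominator in \eqref{eq:velocity_interpolation_global} is positive. Then $A(\cdot,t)$ is affine on each bounded interval $\bigl(x_t^{\pi_j(i)}, x_t^{\pi_j(i+1)}\bigr)$ and constant on the two unbounded intervals (taking $x_t^{\pi_j(0)}=-\infty$ and $x_t^{\pi_j(N_j+1)}=\infty$), so it suffices to check matching at the interior nodes. Evaluating \eqref{eq:velocity_interpolation_global} at $x=x_t^{\pi_j(i)}$ from the interval on its right gives $V\bigl(v_t^{\pi_j(i-1)}, v_t^{\pi_j(i)}\bigr)$, and from the interval on its left gives the same value; hence the one-sided limits agree and $A(\cdot,t)$ is continuous on $\R$.

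For the one-sided Lipschitz property I will use the elementary fact that a continuous piecewise-linear function satisfies \eqref{eq:one_sided_lip} with constant $C$ exactly when each of its slopes is at most $C$ (equivalently, $x\mapsto A(x,t)-Cx$ is nonincreasing). Abbreviating the two bounding velocities as $V_i \coloneqq V\bigl(v_t^{\pi_j(i-1)}, v_t^{\pi_j(i)}\bigr)$ and $V_{i+1}\coloneqq V\bigl(v_t^{\pi_j(i)}, v_t^{\pi_j(i+1)}\bigr)$, the slope of $A(\cdot,t)$ on $\bigl(x_t^{\pi_j(i)}, x_t^{\pi_j(i+1)}\bigr)$ is
\[
    S_i = \frac{V_{i+1}-V_i}{x_t^{\pi_j(i+1)} - x_t^{\pi_j(i)}},
\]
so the task reduces to bounding $S_i$ from above by a constant independent of $i$ and $t$.

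The main obstacle is that the denominator vanishes when particles collide, so individual slopes need not remain bounded; the crux is that they can only blow up downward. I split according to whether the interval carries mass. If $v_t^{\pi_j(i)}=0$, this density is identically zero by \eqref{eq:value_ode}, and $V_i = V\bigl(v_t^{\pi_j(i-1)},0\bigr)=\max_{[0,v_t^{\pi_j(i-1)}]}a$ while $V_{i+1}=V\bigl(0,v_t^{\pi_j(i+1)}\bigr)=\min_{[0,v_t^{\pi_j(i+1)}]}a$; since $a(0)=f'(0)$ lies in both ranges, $V_{i+1}-V_i\leq 0$ and hence $S_i\leq 0$. This is precisely the regime in which collisions occur, so the blow-up is harmless. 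If instead $v_t^{\pi_j(i)}>0$, then its conserved mass $m_i$ (Proposition~\ref{prop:a_priori_properties}(i)) is positive, which together with the maximum principle $v_t^{\pi_j(i)}\leq v_0^*$ keeps the gap bounded away from zero — so such an interval never closes, and consequently every surviving interval inherits the mass of some original interval. This yields
\[
    x_t^{\pi_j(i+1)}-x_t^{\pi_j(i)} = \frac{m_i}{v_t^{\pi_j(i)}}\geq \frac{m_i}{v_0^*}\geq \frac{m_{\min}}{v_0^*}, \qquad m_{\min}\coloneqq \min\set{\dx_0^k v_0^k : v_0^k>0}>0,
\]
and with $|V_{i+1}-V_i|\leq a_{\max}-a_{\min}$ this gives $S_i\leq (a_{\max}-a_{\min})v_0^*/m_{\min}$. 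Taking $C \coloneqq \max\bigl(0,\,(a_{\max}-a_{\min})v_0^*/m_{\min}\bigr)$ bounds every slope from above uniformly in $i$ and $t$, establishing \eqref{eq:one_sided_lip} for all $t\in[0,T]\setminus\mathcal{T}_c$; since $\mathcal{T}_c$ is finite and $C$ uniform, the bound passes to the collision times by letting the (empty) collapsing intervals degenerate to points.
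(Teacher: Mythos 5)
Your proof is correct and takes essentially the same route as the paper's: both arguments hinge on the observation that, by the particle separation bound of Proposition~\ref{prop:a_priori_properties}, an interval can only collapse if its local density vanishes, in which case $V(0, v_t^{i+1}) = \min_{v \in [0, v_t^{i+1}]} a(v) \leq \max_{v \in [0, v_t^{i-1}]} a(v) = V(v_t^{i-1}, 0)$, so the only slopes that can blow up are nonpositive and hence harmless for the one-sided Lipschitz condition. Your version simply makes explicit some bookkeeping the paper leaves implicit (the node-matching check for continuity and the uniform constant via the minimal positive mass $m_{\min}$).
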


\begin{proof}
    Since $x \mapsto A(x, t)$ is a linear interpolation of values $V(v^{i-1}, v^i)$ on interpolation nodes $(x_t^i)$, it is Lipschitz except possibly at collision times, towards which two particles move arbitrarily close. Consider without loss of generality the interval $[0, t_1]$. From \eqref{eq:particle_bounds}, two particles $x^i$ and $x^{i+1}$ can only collide if $v^i = 0$. But then
    \begin{equation*}
        A(x_t^{i+1}, t) = V(0, v_t^{i+1}) = \min_{v \in [0, v_t^{i+1}]} a(v) \leq \max_{v \in [0, v_t^{i-1}]} a(v) = V(v_t^{i-1}, 0) = A(x_t^{i}, t),
    \end{equation*}
    for all $t \in [0, t_1]$, and the one-sided Lipschitz condition \eqref{eq:one_sided_lip} holds.
\end{proof}

Consider the system of ODEs
\begin{equation} \label{eq:flow}
    \left\{
    \begin{alignedat}{2}
        & \dot{x}_t = A(x_t, t) \qquad && \text{for a.e.}\ t \in (s, T) \\
        & x_s = x.
    \end{alignedat}
    \right.
\end{equation}
Using the theory for continuity equations outlined in Section~\ref{subsec:continuity_eq}, we obtain a representation formula for $v_k$.

\begin{proposition} \label{prop:representation}
    The system of ODEs \eqref{eq:flow} generates a unique forward Lipschitz flow $X = X_t(x, s)$. Let $v$ be defined by \eqref{eq:approx_sol_global}. Then for all $k \in \R$, the function $v_k = v-k$ is the unique weak solution of \eqref{eq:cont_eq_shifted}. Moreover, for all $s \in [0, T]$, it satisfies
    \begin{equation*} \label{eq:representation_k}
        v_k(t) = (X_t(s))_\#v_k(s) - k \int_s^t X_t(r)_{\#} (\partial_x A(r))\, dr
    \end{equation*}
    on $\R \times (s, T)$.
\end{proposition}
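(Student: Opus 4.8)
The plan is to assemble the three claims from the general continuity-equation theory of Section~\ref{subsec:continuity_eq}, using the preceding lemma to verify its hypotheses for the velocity field $A$.

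For the forward flow, I would invoke Lemma~\ref{lemma:forward_flow} directly. Its hypotheses are furnished by the lemma immediately above: the map $x \mapsto A(x, t)$ is continuous for every $t \in [0, T] \setminus \mathcal{T}_c$, and since there are at most $N-1$ collisions the set $\mathcal{T}_c$ is finite, so this holds for a.e.~$t$; moreover $A(\cdot, t)$ is one-sided Lipschitz for all $t$. The only point needing attention is that the one-sided Lipschitz constant in \eqref{eq:one_sided_lip} can be chosen uniformly in $t$. Away from collisions $A$ is genuinely Lipschitz, while at a potential collision the jump created by the vanishing particle gap is downward (by the computation $V(0, v^{i+1}) \le V(v^{i-1}, 0)$ in the preceding proof), which is harmless for \eqref{eq:one_sided_lip}; the resulting constant depends only on $a_{\max}$, $a_{\min}$, and the particle-separation bounds of Proposition~\ref{prop:a_priori_properties}. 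Lemma~\ref{lemma:forward_flow} then produces the unique forward Lipschitz flow $X = X_t(x, s)$.

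Next I would recognize \eqref{eq:cont_eq_shifted} as an instance of the general continuity equation \eqref{eq:general_continuity} with velocity $b = A$ and source $\source = -k\, \partial_x A$. Proposition~\ref{prop:weak_solution_cont_eq} shows that $v$ solves \eqref{eq:weak_continuity_unshifted} weakly, and subtracting the constant $k$ (as noted just before the statement) shows that $v_k$ solves \eqref{eq:cont_eq_shifted} in the sense of \eqref{eq:weak_general_continuity}. To fit the framework I must check the source regularity: for a.e.~$t$ the function $A(\cdot, t)$ is continuous and piecewise linear, so its distributional derivative $\partial_x A(\cdot, t)$ is an honest $L^1$ function with no atomic part, and the bound $\esssup_{t} \int_\R |\partial_x A(x, t)|\, dx \le (N+1) a_{\max}$ recorded above gives $\source \in L^\infty((0, T); L^1(\R))$.

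With the hypotheses verified, I would finally apply the representation formula of Lemma~\ref{lemma:general_representation_formula} --- in its version with a general starting time $s$, obtained by restricting to $[s, T]$ and treating $v_k(s)$ as initial data --- to the weak solution $v_k$. Substituting $\source = -k\, \partial_x A$ into \eqref{eq:push_forward} yields precisely the asserted identity. Since the right-hand side of \eqref{eq:push_forward} is determined by the flow $X$ and the data $v_k(s)$ alone, and every weak solution must satisfy it for all $\vartheta \in C_c(\R)$, the value $v_k(t)$ is pinned down uniquely, giving uniqueness of the weak solution as an immediate byproduct. I expect the only genuinely delicate point to be the uniformity in $t$ of the one-sided Lipschitz constant near collision times; the remaining work is routine verification that $A$ and $\partial_x A$ meet the assumptions of the already-established lemmas.
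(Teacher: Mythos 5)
Your proposal is correct and follows exactly the route the paper intends: the paper states Proposition~\ref{prop:representation} without an explicit proof, deferring to the preceding lemma (continuity and one-sided Lipschitz regularity of $A$) together with Lemma~\ref{lemma:forward_flow} and Lemma~\ref{lemma:general_representation_formula} applied to \eqref{eq:cont_eq_shifted} with velocity $A$ and source $-k\,\partial_x A$, which is precisely the assembly you carry out. Your added attention to the uniformity in $t$ of the one-sided Lipschitz constant near collisions and to the $L^\infty((0,T);L^1(\R))$ bound on the source are exactly the details the paper leaves implicit.
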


\subsection{Entropy inequality} \label{subsec:entropy_inequality}

We prove that $v$ satisfies the approximate entropy inequality \eqref{eq:strong_approx_entropy_ineq}, a key step in our analysis. The inequality in the following lemma turns out to be the cornerstone of this proof. It does not hold in general for any other choice of particle velocity other than \eqref{eq:velocity}. In this sense, it confirms that \eqref{eq:velocity} is the correct entropic velocity. Recall that we denote $v - k$ by $v_k$.

\begin{lemma} \label{lemma:entropic_velocity_step}
    Let $v$ be given by \eqref{eq:approx_sol_global}. Then
    \begin{equation} \label{eq:entropic_velocity_step}
        \begin{aligned}
            & \int_{\R} \vartheta(x) \sgn(v_k(x, t)) \partial_x (A(x, t) - a(k))\, dx \\
            & \quad \geq - \int_{\R} \partial_x \vartheta(x) \sgn(v_k(x, t)) (A(x, t) - a(k))\, dx
        \end{aligned}
    \end{equation}
    for all $t \in (0, T) \setminus \mathcal{T}_c$, all $k \in \R$ and all nonnegative $\vartheta \in C^\infty_c(\R)$.
\end{lemma}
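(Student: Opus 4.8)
The plan is to fix a time $t \in (0,T) \setminus \mathcal{T}_c$ and exploit the piecewise structure of $v(\cdot,t)$ and $A(\cdot,t)$ to collapse the integral inequality into a finite sum of pointwise inequalities located at the particle positions. At such a time $v(\cdot,t)$ is piecewise constant on the intervals $(x_t^i, x_t^{i+1})$, so $\sgn(v_k(\cdot,t))$ is likewise piecewise constant, taking the value $s_i \coloneqq \sgn(v_t^i - k)$ on $(x_t^i, x_t^{i+1})$ and jumping only (possibly) at the nodes $x_t^1, \dots, x_t^N$. Moreover $A(\cdot,t)$ is continuous and piecewise linear with node values $A(x_t^j, t) = V(v_t^{j-1}, v_t^j)$, so $\partial_x A(\cdot,t)$ is a bounded function with no singular part and $\partial_x(A - a(k)) = \partial_x A$. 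Both integrals in \eqref{eq:entropic_velocity_step} are therefore honest Lebesgue integrals, and the inequality is equivalent to $\int_\R \vartheta \sgn(v_k)\partial_x A\, dx + \int_\R \partial_x\vartheta\, \sgn(v_k)(A - a(k))\, dx \geq 0$.

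First I would split the second integral over the intervals $(x_t^i, x_t^{i+1})$ and integrate by parts on each piece, where $\sgn(v_k)$ equals the constant $s_i$ and $A$ is affine. Setting $\Phi(x) \coloneqq \vartheta(x)(A(x,t) - a(k))$, which is continuous with compact support, the boundary contributions at $\pm\infty$ vanish, and an Abel summation telescopes the node terms to the identity
\begin{equation*}
  \int_\R \vartheta \sgn(v_k)\partial_x A\, dx + \int_\R \partial_x\vartheta\, \sgn(v_k)(A - a(k))\, dx = \sum_{j=1}^N (s_{j-1} - s_j)\,\vartheta(x_t^j)\bigl(V(v_t^{j-1}, v_t^j) - a(k)\bigr).
\end{equation*}
The sought inequality \eqref{eq:entropic_velocity_step} is exactly the assertion that this right-hand side is nonnegative, and since $\vartheta \geq 0$ it suffices to show that each summand is nonnegative.

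The crux — and the step I expect to be the main obstacle, as it is precisely where the entropic velocity \eqref{eq:velocity} is used — is therefore the purely algebraic claim that
\begin{equation*}
  \bigl(\sgn(v_l - k) - \sgn(v_r - k)\bigr)\bigl(V(v_l, v_r) - a(k)\bigr) \geq 0
\end{equation*}
for all $v_l, v_r \geq 0$ and all $k \in \R$. I would prove this by comparing $\sgn(v_l - k)$ with $\sgn(v_r - k)$. If they agree the first factor vanishes; in particular this settles every $k < 0$, since then $v_l, v_r \geq 0 > k$. If $\sgn(v_l - k) < \sgn(v_r - k)$, then $v_l \leq k \leq v_r$, so $0 \leq v_l \leq v_r$ and $V(v_l, v_r) = \min_{v \in [v_l, v_r]} a(v) \leq a(k)$ because $k \in [v_l, v_r]$; both factors are nonpositive and the product is nonnegative. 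Symmetrically, if $\sgn(v_l - k) > \sgn(v_r - k)$, then $v_r \leq k \leq v_l$, so $V(v_l, v_r) = \max_{v \in [v_r, v_l]} a(v) \geq a(k)$ and both factors are nonnegative. (The endpoint $k = 0$ is covered by reading $a(0) = f'(0)$, consistently with the min/max ranging over intervals containing $0$.) Applying this with $v_l = v_t^{j-1}$, $v_r = v_t^j$ to each summand yields the nonnegativity of the sum and hence \eqref{eq:entropic_velocity_step}. The min/max structure of $V$ is what forces $V \leq a(k)$ when $k$ separates the states in the increasing case and $V \geq a(k)$ in the decreasing case, which is exactly the mechanism that singles out \eqref{eq:velocity} as the entropic velocity.
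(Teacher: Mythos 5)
Your proposal is correct and follows essentially the same route as the paper's proof: split over the intervals $(x_t^i, x_t^{i+1})$ where $\sgn(v_k)$ is constant, integrate by parts so that everything telescopes to the node sum $\sum_j (s_{j-1}-s_j)\vartheta(x_t^j)\bigl(V(v_t^{j-1},v_t^j)-a(k)\bigr)$, and then verify nonnegativity of each summand via the case analysis $v_l \leq k \leq v_r$ (forcing $V = \min a \leq a(k)$) versus $v_r \leq k \leq v_l$ (forcing $V = \max a \geq a(k)$). The only cosmetic difference is that you combine both integrals into a total derivative of $\vartheta\,(A-a(k))$ before telescoping, whereas the paper integrates the left-hand side by parts and reads off the boundary terms; the key algebraic inequality at the particle positions is identical.
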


\begin{proof}
    Let $t \in (0, t_1)$ without loss of generality. Splitting the integral on the left-hand side of \eqref{eq:entropic_velocity_step} and using integration by parts, we see that
    \begin{equation} \label{eq:entropic_velocity_integral_split}
        \begin{aligned}
            & \int_{\R} \vartheta(x) \sgn(v_k(x, t)) \partial_x \bigl(A(x, t) - a(k)\bigr) \,dx \\
            & \quad = \sum_{i = 0}^N \int_{x_t^i}^{x_t^{i + 1}} \vartheta(x) \sgn(v_t^i - k) \partial_x \bigl(A(x, t) - a(k)\bigr) \,dx \\
            & \quad = \sum_{i = 0}^N \sgn(v_t^i - k) \biggl[\vartheta(x_t^{i+1}) \bigl(A(x_t^{i + 1}, t) - a(k)\bigr) - \vartheta(x_t^i) \bigl(A(x_t^i, t) - a(k)\bigr)\biggr] \\
            & \qquad - \sum_{i = 0}^N \int_{x_t^i}^{{x_t^{i + 1}}} \partial_x \vartheta(x) \sgn(v_t^i - k) \bigl(A(x, t) - a(k)\bigr)\, dx.
        \end{aligned}
    \end{equation}
    In the third line, the series is telescoping except when $v_t^i - k$ and $v_t^{i + 1} - k$ have different signs. This becomes apparent when writing
    \begin{equation*}
        \begin{aligned}
            & \sum_{i = 0}^N \sgn(v_t^i - k) \biggl[\vartheta(x_t^{i + 1}) \bigl(A(x_t^{i + 1}, t) - a(k)\bigr) - \vartheta(x_t^{i}) \bigl(A(x_t^{i}, t) - a(k)\bigr)\biggr] \\
            & \quad = \sum_{i = 0}^{N-1} \bigl(\sgn(v_t^i - k) - \sgn(v_t^{i + 1} - k)\bigr) \vartheta(x_t^{i+1}) \bigl(A(x_t^{i+1}, t) - a(k)\bigr),
        \end{aligned}
    \end{equation*}
    where we used $\vartheta(x_t^0) = \vartheta(x_t^{N+1}) = 0$. If $\sgn(v_t^i-k) < \sgn(v_t^{i+1}-k)$, then $v_t^{i} \leq k \leq v_t^{i+1}$, and consequently
    \begin{equation*}
        A(x_t^{i+1}, t) = \min_{v \in [v_t^i, v_t^{i + 1}]} a(v) \leq a(k).
    \end{equation*}
    On the other hand, if $\sgn(v_t^i-k) > \sgn(v_t^{i+1}-k)$, then $v_t^{i} \geq k \geq v_t^{i+1}$, and
    \begin{equation*}
        A(x_t^{i+1}, t) = \max_{v \in [v_t^{i+1}, v_t^{i}]} a(v) \geq a(k).
    \end{equation*}
    In both cases, the inequality
    \begin{equation*}
        \bigl(\sgn(v_t^i-k) - \sgn(v_t^{i + 1}-k)\bigr) \vartheta(x_t^{i+1}) \bigl(A(x_t^{i+1}, t) - a(k)\bigr) \geq 0
    \end{equation*}
    holds. Inserting this in \eqref{eq:entropic_velocity_integral_split} yields \eqref{eq:entropic_velocity_step}.
\end{proof}

Next, we use the previous lemma to prove the approximate entropy inequality.

\begin{theorem}
    The function $v_k = v - k$, where $v$ is given by \eqref{eq:approx_sol_global}, satisfies
    \begin{equation} \label{eq:weak_approx_entropy_ineq}
        \begin{aligned}
            & \int_0^T \int_\R |v_k| \partial_t \varphi + (Av - f(k)) \sgn(v_k) \partial_x \varphi \,dx\,dt \\
            & \quad - \int_\R \varphi(T) |v_k(T)|\, dx + \int_\R \varphi(0) |v_{0, k}|\, dx \geq 0
        \end{aligned}
    \end{equation}
    for all $k \geq 0$ and nonnegative $\varphi \in C^\infty_c(\R \times [0, T])$.
\end{theorem}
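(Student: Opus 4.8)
The goal is the weak/integrated entropy inequality~\eqref{eq:weak_approx_entropy_ineq}. I have two tools: the representation-formula machinery behind Proposition~\ref{prop:representation} (so that $v_k$ solves the shifted continuity equation~\eqref{eq:cont_eq_shifted} with source $-k\partial_x A$), and the pointwise-in-$x$ sign inequality of Lemma~\ref{lemma:entropic_velocity_step}. The strategy is to combine them: produce a differential (in time) inequality for $\int_\R |v_k|\vartheta\,dx$ against nonnegative test functions $\vartheta$, then integrate against the time profile to recover the space-time weak form. Let me think about how these fit together.

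Let me think through the key computation. For fixed $t\notin\mathcal T_c$, $v_k(\cdot,t)$ is piecewise constant with jumps at the $(x_t^i)$, so $|v_k|=\sgn(v_k)\,v_k$ with $\sgn(v_k)$ constant on each interval. The plan is to test the shifted equation~\eqref{eq:cont_eq_shifted} against $\sgn(v_k)\vartheta$ (morally $\eta'(v_k)$ for Kruzhkov entropy $\eta=|\cdot|$), legitimate here because $\sgn(v_k)$ is piecewise constant in $x$ so the only contributions come from the smooth pieces and the product differentiates cleanly. On each interval $(x_t^i,x_t^{i+1})$ one has $\partial_t|v_k|=\sgn(v_t^i-k)\partial_t v_k$ and $\sgn(v_k)(Av_k)=|v_k|A$ up to a sign-carrying flux term. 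Carrying out the integration by parts in $x$, the flux becomes $(Av_k)\sgn(v_k)$, and I want to land on the entropy flux $(Av-f(k))\sgn(v_k)=\sgn(v_k)(Av_k) + \sgn(v_k)(ka(k)\cdot\text{something})$—I need to be careful that the combination $Av-f(k)$, not $Av_k=Av-ka$, is what appears. The extra source $-k\partial_x A$ is precisely what converts one into the other: pairing the source with $\sgn(v_k)\vartheta$ gives $-k\int \sgn(v_k)\vartheta\,\partial_x A\,dx$, and the discrepancy between $a(k)$ and $f(k)/k$ is absorbed because $f(k)=ka(k)$. This is where Lemma~\ref{lemma:entropic_velocity_step} enters: it guarantees that the source term, after integration by parts, contributes with the correct (nonnegative) sign rather than an uncontrolled one.

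Concretely, I would proceed as follows. First, fix $t\in(0,T)\setminus\mathcal T_c$ and compute, using~\eqref{eq:approx_sol_global} and the ODEs~\eqref{eq:particle_ode}--\eqref{eq:value_ode}, the time derivative $\frac{d}{dt}\int_\R |v_k(x,t)|\vartheta(x)\,dx$ for nonnegative $\vartheta\in C_c^\infty(\R)$, mimicking the telescoping computation already done in the proof of Proposition~\ref{prop:weak_solution_cont_eq} but with the sign factor $\sgn(v_t^i-k)$ inserted. Second, rewrite the resulting boundary/flux terms so that the flux $(Av-f(k))\sgn(v_k)$ appears under $\partial_x\vartheta$; the leftover terms are exactly those controlled by Lemma~\ref{lemma:entropic_velocity_step}, and applying that lemma turns the identity into the one-sided inequality
\begin{equation*}
    \frac{d}{dt}\int_\R |v_k(x,t)|\vartheta(x)\,dx \leq \int_\R (Av-f(k))\sgn(v_k)\,\partial_x\vartheta\,dx.
\end{equation*}
Third, I take a general test function $\varphi\in C_c^\infty(\R\times[0,T])$, set $\vartheta(x)=\varphi(x,t)$ for each $t$, multiply the differential inequality and integrate in time over $(0,T)$. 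Since $t\mapsto \int_\R|v_k|\varphi\,dx$ is absolutely continuous on each collision-free subinterval (Lemma~\ref{lemma:temporal_modulus} gives the requisite regularity, and $v$ can only jump downward in total variation across the finitely many $t_j\in\mathcal T_c$), integration by parts in $t$ produces the $\partial_t\varphi$ term together with the boundary terms $-\int_\R\varphi(T)|v_k(T)|+\int_\R\varphi(0)|v_{0,k}|$, yielding~\eqref{eq:weak_approx_entropy_ineq}.

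The main obstacle I anticipate is the bookkeeping across collision times $t_j\in\mathcal T_c$. Within a single collision-free interval the computation is a clean Kruzhkov-type manipulation, but at each $t_j$ particles and local densities are deleted, so I must verify that the deletions do not inject mass of $|v_k|$ in the wrong direction—i.e., that the map $t\mapsto\int_\R|v_k(x,t)|\varphi(x,t)\,dx$ has no upward jumps across $\mathcal T_c$ (equivalently, that the deletion is entropy-dissipative). This should follow from the fact that only leftmost particles in a collision are removed and that mass between surviving particles is conserved, mirroring the total-variation argument in part~(iv) of Proposition~\ref{prop:a_priori_properties}; but it requires checking that the pointwise values of $|v_k|$ at the deletion do not increase the weighted integral, which is the one genuinely new estimate. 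The remaining subtlety is the nonnegativity hypothesis $k\geq 0$, used precisely in Lemma~\ref{lemma:entropic_velocity_step} (which invokes the form of $V$ in~\eqref{eq:velocity} valid for nonnegative states); I would flag where this is essential.
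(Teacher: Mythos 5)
Your proposal is correct, but it takes a genuinely different route from the paper. The paper never differentiates $\int_\R |v_k|\vartheta\,dx$ in time; instead it uses the flow machinery: the pushforward representation formula of Proposition~\ref{prop:representation} is applied to $v_k(t+\eps)$, the elementary bound $\sgn\bigl(v_k(X_{t+\eps}(t),t+\eps)\bigr)v_k(t)\leq |v_k(t)|$ replaces the chain rule on $|\cdot|$, and the inequality~\eqref{eq:weak_approx_entropy_ineq} emerges in the limit $\eps\to 0$ of a time-shifted identity, with Lemma~\ref{lemma:entropic_velocity_step} invoked at the very end to control the source term $k\,\partial_x A$. Your route instead exploits the fact that $v$ is an explicit, finitely-parametrized piecewise-constant function: inserting the sign factor $\sgn(v_t^i-k)$ into the telescoping computation of Proposition~\ref{prop:weak_solution_cont_eq} gives, on each collision-free interval,
\begin{equation*}
    \frac{d}{dt}\int_\R |v_k|\vartheta\,dx
    = \int_\R \sgn(v_k)\,v\,A\,\partial_x\vartheta\,dx
    - k\sum_{i=0}^{N-1}\bigl(\sgn(v_t^i-k)-\sgn(v_t^{i+1}-k)\bigr)\vartheta(x_t^{i+1})\,A(x_t^{i+1},t),
\end{equation*}
and since $\int_\R \sgn(v_k)\partial_x\vartheta\,dx$ telescopes to the same sign differences, the desired bound by $\int_\R (Av-f(k))\sgn(v_k)\partial_x\vartheta\,dx$ reduces (using $f(k)=k\,a(k)$ and $k\geq0$) to exactly the pointwise inequality $\bigl(\sgn(v_t^i-k)-\sgn(v_t^{i+1}-k)\bigr)\vartheta(x_t^{i+1})\bigl(A(x_t^{i+1},t)-a(k)\bigr)\geq 0$ at the core of Lemma~\ref{lemma:entropic_velocity_step}. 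So both proofs rest on the same entropic ingredient, but yours is more elementary: it bypasses Lemma~\ref{lemma:forward_flow}, Proposition~\ref{prop:representation} and the measure-theoretic justification of the pushforward entirely, at the price of working only because $v$ is piecewise constant; the paper's argument is the one that would survive in settings where the approximation has no such explicit structure. On your one flagged worry, the collision times: the clean resolution is not the mass-conservation/total-variation argument you gesture at, but Proposition~\ref{prop:a_priori_properties}(iii), which forces any colliding pair to have intervening density $v^i\equiv 0$; the contribution of that interval to $\int_\R|v_k|\varphi\,dx$ is $k\int_{x_t^i}^{x_t^{i+1}}\varphi\,dx$, which vanishes continuously as the interval collapses, so $t\mapsto\int_\R|v_k|\varphi\,dx$ is continuous across each $t_j$ and the time integration by parts goes through with no upward jumps.
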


\begin{proof}
    Let $ 0 \leq \varphi \in C^\infty_c(\R \times [0, T])$ and set $\eps > 0$. We begin by writing
    \begin{equation} \label{eq:weak_approx_entropy_ineq_start}
        \begin{aligned}
            \int_0^T \int_{\R} \varphi |v_k| \,dx\,dt & = \int_0^\eps \int_{\R} \varphi |v_k|\, dx\,dt \\
            & \quad + \int_0^{T-\eps}\! \int_{\R} \varphi(t + \eps) \sgn(v_k(t+\eps)) v_k(t + \eps)\,dx\,dt.
        \end{aligned}
    \end{equation}
    For the last term, we use the formula
    \begin{equation*}
        v_k(t+\eps) = (X_{t+\eps}(t))_\#v_k(t) - k \int_t^{t + \eps} (X_{t+\eps}(r))_{\#} (\partial_x A(r))\, dr
    \end{equation*}
    from Proposition~\ref{prop:representation}, which gives
    \begin{equation*}
        \begin{aligned}
            & \int_0^{T-\eps}\! \int_{\R} \varphi(t + \eps) \sgn(v_k(t + \eps)) v_k(t + \eps)\,dx\,dt \\
            & \quad = \int_0^{T-\eps}\! \int_{\R} \varphi\bigl(X_{t+\eps}(t), t + \eps\bigr) \sgn\bigl(v_k(X_{t+\eps}(t), t + \eps)\bigr) v_k(t)\, dx\,dt \\
            & \qquad - k \int_0^{T-\eps}\! \int_t^{t + \eps}\! \int_{\R} \varphi\bigl(X_{t+\eps}(t), t + \eps\bigr) \sgn\bigl(v_k(X_{t+\eps}(r), t + \eps)\bigr) \partial_x A(r) \,dx\,dr\,dt
        \end{aligned}
    \end{equation*}
    (see \cite[Definition 1.70]{ambrosio_fusco_pallara_2000} for justification of using the push-forward formula \eqref{eq:push_forward} in this setting). In the above integrals, we have dropped the spatial argument for readability. Since
    \begin{equation*}
        \begin{aligned}
            & \int_0^{T-\eps}\! \int_{\R} \varphi\bigl(X_{t+\eps}(t), t + \eps\bigr) \sgn\bigl(v(X_{t+\eps}(t), t + \eps)\bigr) v_k(t)\, dx\,dt \\
            & \quad \leq \int_0^{T-\eps}\! \int_{\R} \varphi\bigl(X_{t+\eps}(t), t + \eps\bigr) |v_k(t)| \,dx\,dt,
        \end{aligned}
    \end{equation*}
    inserting the above in \eqref{eq:weak_approx_entropy_ineq_start} yields
    \begin{equation*}
        \begin{aligned}
            & \int_0^T \int_{\R} \varphi |v_k| \,dx\,dt \leq \int_0^\eps \int_{\R} \varphi |v_k| \,dx\,dt + \int_0^{T-\eps}\! \int_{\R} \varphi\bigl(X_{t+\eps}(t), t + \eps\bigr) |v_k(t)| \,dx\,dt \\
            & \qquad - k \int_0^{T-\eps}\! \int_t^{t + \eps}\! \int_{\R} \varphi\bigl(X_{t+\eps}(r), t + \eps\bigr) \sgn\bigl(v_k(X_{t+\eps}(r), t + \eps)\bigr) \partial_x A(r) \,dx\,dr\,dt.
        \end{aligned}
    \end{equation*}
    Dividing by $\eps$ and rearranging the terms, we have
    \begin{equation} \label{eq:finite_entropy_difference}
        \begin{aligned}
            & \int_0^{T-\eps}\! \int_{\R} \frac{\varphi\bigl(X_{t+\eps}(x, t), t + \eps\bigr) - \varphi(x, t)}{\eps} |v_k(x, t)| \,dx\,dt \\
            & \quad - \frac{1}{\eps} \int_{T-\eps}^{T}\! \int_{\R} \varphi(x, t) |v_k(x, t)|\, dx\,dt + \frac{1}{\eps} \int_0^{\eps} \int_{\R} \varphi(x, t) |v_k(x, t)|\, dx\,dt \\
            & \geq \frac{k}{\eps} \int_0^{T-\eps}\! \int_t^{t + \eps}\! \int_{\R} \varphi\bigl(X_{t+\eps}(x, r), t + \eps\bigr) \sgn\bigl(v_k(X_{t+\eps}(x, r), t + \eps)\bigr) \partial_x A(x, r) \,dx\,dr\,dt.
        \end{aligned}
    \end{equation}
    Towards passing $\eps \to 0$, note that
    \begin{equation} \label{eq:finite_flow_difference}
        \begin{aligned}
            & \frac{\varphi\bigl(X_{t+\eps}(x, t), t + \eps\bigr) - \varphi(x, t)}{\eps} \\
            & \quad = \int_0^1 \partial_t\varphi(x, t+r\eps)+ \partial_x \varphi\big(x+r(X_{t+\eps}(x,t) - x), t + \eps\big) \bigg(\frac{X_{t+\eps}(x, t) - x}{\eps}\bigg) \,dr.
        \end{aligned}
    \end{equation}
    Since furthermore
    \begin{equation*}
        \frac{X_{t+\eps}(x, t) - x}{\eps} = \frac{1}{\eps} \int_t^{t+\eps} A(X_r(x, t), r)\, dr
    \end{equation*}
    and the velocity $A$ is continuous for all $t \in [0, T] \setminus \mathcal{T}_c$, we see that \eqref{eq:finite_flow_difference} converges to $\partial_t \varphi + A \partial_x \varphi$ in $L^1(\R \times (0, T))$ as $\eps \to 0$. Taking the limit in \eqref{eq:finite_entropy_difference} therefore yields
    \begin{equation*}
        \begin{aligned}
            & \int_0^{T} \int_{\R} \bigl(\partial_t \varphi + A \partial_x \varphi\bigr) |v_k| \,dx\,dt - \int_{\R} \varphi(T) |v_k(T)|\, dx + \int_{\R} \varphi(0) |v_{0, k}| \,dx\,dt \\
            & \quad \geq k \int_0^{T} \int_{\R} \varphi \sgn(v_k) \partial_x A \,dx\,dt.
        \end{aligned}
    \end{equation*}
    We now insert $\partial_x A = \partial_x(A - a(k))$ and use the result of Lemma~\ref{lemma:entropic_velocity_step}, by which we get
    \begin{equation*}
        \begin{aligned}
            & \int_0^{T} \int_{\R} \bigl(\partial_t \varphi + A \partial_x \varphi\bigr) |v_k| \,dx\,dt - \int_{\R} \varphi(T) |v_k(T)|\, dx + \int_{\R} \varphi(0) |v_{0, k}|\,dx\,dt \\
            & \quad \geq - k \int_0^{T} \int_{\R} \partial_x \varphi \sgn(v_k) (A - a(k)) \,dx\,dt
        \end{aligned}
    \end{equation*}
    for all nonnegative $k \in \R$. Finally, in view of
    \begin{equation*}
        A |v_k| + k \sgn(v_k)(A - a(k)) = (A v - f(k)) \sgn(v_k),
    \end{equation*}
    we arrive at \eqref{eq:weak_approx_entropy_ineq}.
\end{proof}

\section{Convergence rate} \label{sec:convergence_rate}

By comparing \eqref{eq:weak_approx_entropy_ineq} to the exact entropy inequality, it is possible to estimate how far $v$ is from being an entropy solution of the conservation law \eqref{eq:conservation_law}. Following \cite[Chap.~3.3]{holden_risebro_2015}, we define the Kruzkhov form
\begin{equation*}
    \begin{aligned}
        \Lambda_T(v, \varphi, k) & = \int_0^T \int_\R |v_k| \partial_t \varphi + \bigl(f(v) - f(k)\bigr) \sgn(v_k) \partial_x \varphi\, dx\,dt \\
        & \quad - \int_\R |v_k(T)| \varphi(T)\, dx + \int_\R |v_{k, 0}| \varphi(0)\, dx,
    \end{aligned}
\end{equation*}
where $k \in \R$ and $\varphi \in C^\infty_c(\R \times [0, T])$. Let $(\rho_\eps)_{\eps \geq 0}$ be a family of standard mollifiers, and set
\begin{equation*}
    \Omega^{\eps, \tau}(x, y, t, s) = \rho^\eps(x-y) \rho^\tau(t-s).
\end{equation*}
Furthermore define
\begin{equation} \label{eq:double_kruzkov_form}
    \Lambda_T^{\eps, \tau}(v, u) \coloneqq \int_0^T \int_{\R} \Lambda_T(v(\cdot, \cdot), \Omega^{\eps, \tau}(\cdot, y, \cdot, s), u(y, s)) \,dy\,ds.
\end{equation}

To prove Theorem \ref{thm:main}, we will invoke Kuznetsov's lemma \cite[Theorem 3.14]{holden_risebro_2015}: If $u$ is the entropy solution of \eqref{eq:conservation_law} and $v$ is an approximation which satisfies
\begin{equation} \label{eq:kuznetsov_class}
    \norm{v(t)}_{L^\infty} \leq \norm{v_0}_{L^\infty}\quad \text{and}\quad |v(t)|_{\BV} \leq |v_0|_{\BV}
\end{equation}
for all $t \in [0, T]$, then
\begin{equation} \label{eq:kuznetsov}
    \begin{aligned}
        \norm{v(T) - u(T)}_{L^1(\R)} & \leq \norm{v_0 - u_0}_{L^1(\R)} + |u_0|_{\BV(\R)} (2\eps + \tau [f]_{\lip}) \\
        & \quad + \nu(v, \tau) - \Lambda^{\eps, \tau}(v, u)
    \end{aligned}
\end{equation}
for all $\eps > 0$ and $\tau \in (0, T)$, where
\begin{equation*}
    \nu(v, \tau) \coloneqq \sup_{t \in (0, T)} \sup_{r \in (0, \tau)} \norm{v(t + r) - v(t)}_{L^1(\R)}.
\end{equation*}
Note in particular that the function $v$ generated by the the particle path scheme satisfies the conditions in \eqref{eq:kuznetsov_class}.

\begin{proof}[Proof of Theorem~\ref{thm:main}]
    Let $v$ be the approximation generated by the particle path scheme as defined in \eqref{eq:approx_sol_global}. Using the approximate entropy inequality \eqref{eq:weak_approx_entropy_ineq}, we have
    \begin{equation*}
        \Lambda_T(v, \varphi, k) \geq \int_0^T \int_\R \bigl(f(v) - Av\bigr) \sgn(v_k)\partial_x \varphi\, dx\,dt
    \end{equation*}
    for all nonnegative $\varphi \in C^\infty_c(\R \times [0, T])$. Plugging this into \eqref{eq:double_kruzkov_form} gives
    \begin{equation} \label{eq:kuznetsov_start}
        \begin{aligned}
            \Lambda_T^{\eps, \tau} & \geq \int_0^T \int_\R \int_0^T \int_\R \bigl(f(v(y, s)) - A(y, s) v(y, s) \bigr) \\
            & \quad \times \sgn(v(y, s) - u(x, t)) \partial_x \Omega^{\eps, \tau}(x, y, t, s) \,dx\,dt\,dy\,ds.
        \end{aligned}
    \end{equation}
    In view of
    \begin{equation*}
        \nu(v, \tau) \leq 4 |\tau| [f]_{\lip} [v_0]_{BV} \leq 4 |\tau| [f]_{\lip} [u_0]_{BV}
    \end{equation*}
    by Lemma~\ref{lemma:temporal_modulus}, we can immediately pass $\tau \to 0$ to get rid of one of the temporal integrals. The remaining contribution to the right hand side of \eqref{eq:kuznetsov_start} can be estimated in the absolute value according to
    \begin{equation*}
        \begin{aligned}
            & \int_0^T \int_\R \int_\R \bigl|f(v(y, t)) - A(y, t) v(y, t) \bigr| \bigl|\partial_x \rho_{\eps}(x-y)\bigr|  \,dx\,dy\,dt \\
            & \quad \leq \frac{1}{\eps} \int_0^T \int_\R \bigl|f(v(y, t)) - A(y, t) v(y, t) \bigr| \,dy\,dt.
        \end{aligned}
    \end{equation*}
    Inserting this in \eqref{eq:kuznetsov} yields
    \begin{equation*}
        \norm{v(T) - u(T)}_{L^1(\R)} \leq \norm{v_0 - u_0}_{L^1(\R)} + 2 \eps |u_0|_{\BV(\R)}  + \frac{1}{\eps} \norm{A v - f(v)}_{L^1(\R \times (0, T))}
    \end{equation*}
    which implies the estimate \eqref{eq:main_stability_result} upon minimization with respect to $\eps$ (i.e.~setting $\eps = \sqrt{\norm{A v - f(v)}_{L^1} / 2 |u_0|_{\BV}}$).
\end{proof}

\begin{remark}
    Throughout the paper, the assumption that $f$ is globally Lipschitz is only used for simplicity of exposition. In fact, it suffices to require $f$ to be Lipschitz within the convex hull of $u_0(\mathbb{R})$. The constant $[f]_{\text{Lip}}$ can then be replaced by the Lipschitz constant of $f$ on this restricted interval.
\end{remark}

Until now, no assumptions on the initial distribution of particles $(x_0^i)_{i = 1}^N$ have been made. We specialize the result of Theorem~\ref{thm:main} to the case when there is a bound on the maximal distance $\dx_0^* \coloneqq \max_{i} \dx_0^i$ between initial particles.

\begin{proposition} \label{prop:explicit_convergence_rate}
    Let $u$ the entropy solution of \eqref{eq:conservation_law} and $v$ the approximation generated by the the particle path scheme. Assume that $f'$ is Lipschitz, and that there is a number ${R_{u_0} > 0}$ such that
    \begin{equation*}
        \int_{-\infty}^{x_0^1}  u_0(x)\, dx + \int_{x_0^N}^{\infty} u_0(x)\, dx \leq R_{u_0}.
    \end{equation*}
    Then the approximation error is bounded by
    \begin{equation*} \label{eq:general_stability_x}
        \begin{aligned}
            \norm{v(T) - u(T)}_{L^1(\R)} \leq R_{u_0} + |u_0|_{\BV(\R)} \biggl(\dx_0^* + 2\sqrt{T [f']_\lip \norm{u_0}_{L^\infty(\R)} \dx_0^*}\biggr).
        \end{aligned}
    \end{equation*}
\end{proposition}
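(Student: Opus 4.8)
The plan is to derive the estimate directly from the Main Theorem \eqref{eq:main_stability_result} by separately controlling its two right-hand side contributions, $\norm{v_0 - u_0}_{L^1(\R)}$ and $\norm{Av - f(v)}_{L^1(\R \times (0,T))}$, under the stated hypotheses that $f'$ is Lipschitz and that the mass of $u_0$ outside $[x_0^1, x_0^N]$ is at most $R_{u_0}$.

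For the initial error I would split $\R$ into the interior $(x_0^1, x_0^N)$ and its complement. On the complement $v_0$ vanishes (recall $v_0^0 = v_0^N = 0$), so that contribution equals $\int_{-\infty}^{x_0^1} u_0\,dx + \int_{x_0^N}^{\infty} u_0\,dx \leq R_{u_0}$. On each interior cell the value $v_0^i$ is the average of $u_0$, so the standard bound $\int_I |u_0 - \bar u_0|\,dx \leq |I|\,|u_0|_{\BV(I)}$ for averages of $\BV$ functions gives $\int_{x_0^i}^{x_0^{i+1}} |u_0 - v_0^i|\,dx \leq \dx_0^i\,|u_0|_{\BV((x_0^i,x_0^{i+1}))} \leq \dx_0^*\,|u_0|_{\BV((x_0^i,x_0^{i+1}))}$; summing over $i$ yields $\norm{v_0 - u_0}_{L^1(\R)} \leq R_{u_0} + \dx_0^*\,|u_0|_{\BV(\R)}$.

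The core of the argument is the flux-defect bound. I would first observe that $f' \in \lip$ together with $f(0)=0$ forces $a = f(\cdot)/\cdot$ to be Lipschitz on $[0,\infty)$ with $[a]_\lip \leq \tfrac12 [f']_\lip$: writing $a'(v) = (f'(v)v - f(v))/v^2$ and $\psi(v) := f'(v)v - f(v)$, one has $\psi(0)=0$ and $\psi'(v) = f''(v) v$, so $|\psi(v)| \leq \tfrac12 [f']_\lip v^2$ and hence $|a'(v)| \leq \tfrac12 [f']_\lip$ (with the value $a(0) = f'(0)$ at the endpoint). On a cell $(x_t^i, x_t^{i+1})$ we have $v \equiv v_t^i$, whence $Av - f(v) = v_t^i\bigl(A - a(v_t^i)\bigr)$; since $A$ is the linear interpolant between $V(v_t^{i-1}, v_t^i) = a(\tilde v_t^i)$ and $V(v_t^i, v_t^{i+1}) = a(\tilde v_t^{i+1})$, with $\tilde v_t^i \in [v_t^{i-1}, v_t^i]$ and $\tilde v_t^{i+1} \in [v_t^i, v_t^{i+1}]$ the points furnished by Lemma~\ref{lemma:temporal_modulus}, Lipschitzness of $a$ bounds $|A - a(v_t^i)|$ pointwise by the convex combination of $[a]_\lip |v_t^{i-1} - v_t^i|$ and $[a]_\lip |v_t^{i+1} - v_t^i|$. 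Integrating the interpolation weight over the cell produces a factor $\dx_t^i/2$, and the crucial move is to invoke mass conservation $v_t^i \dx_t^i = v_0^i \dx_0^i \leq \norm{u_0}_{L^\infty(\R)}\,\dx_0^*$ to absorb the (possibly growing) cell length. Summing over $i$ collapses the two difference-sums into $2\,|v(t)|_{\BV(\R)}$, and the total-variation-diminishing property $|v(t)|_{\BV(\R)} \leq |u_0|_{\BV(\R)}$ then gives $\int_\R |Av - f(v)|\,dx \leq \tfrac12 [f']_\lip \norm{u_0}_{L^\infty(\R)}\,\dx_0^*\,|u_0|_{\BV(\R)}$; integrating in time multiplies this by $T$.

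Finally, inserting both estimates into \eqref{eq:main_stability_result} and simplifying the radical — the factors arrange so that $2\sqrt{2\,|u_0|_{\BV(\R)} \cdot \tfrac{T}{2}[f']_\lip \norm{u_0}_{L^\infty(\R)}\,\dx_0^*\,|u_0|_{\BV(\R)}} = 2\,|u_0|_{\BV(\R)}\sqrt{T[f']_\lip \norm{u_0}_{L^\infty(\R)}\,\dx_0^*}$ — reproduces the claimed bound. I expect the main obstacle to be precisely the flux-defect estimate: identifying the clean constant $[a]_\lip \leq \tfrac12 [f']_\lip$, and recognizing that it is mass conservation, rather than any a priori control on the growth of $\dx_t^i$, that keeps the per-cell contribution of order $\dx_0^*$. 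The initial-data estimate and the final algebra are routine by comparison.
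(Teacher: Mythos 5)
Your proposal is correct and follows essentially the same route as the paper's proof: the same splitting of $\norm{v_0-u_0}_{L^1(\R)}$ into exterior mass (bounded by $R_{u_0}$) plus cell-wise averaging errors, the same flux-defect estimate via $Av-f(v)=v_t^i\bigl(A-a(v_t^i)\bigr)$ on each cell using the points $\tilde v_t^i$, the bound $[a]_\lip\leq\tfrac12[f']_\lip$, mass conservation $v_t^i\dx_t^i=v_0^i\dx_0^i$, and the TVD property, followed by insertion into Theorem~\ref{thm:main}. The only difference is immaterial bookkeeping: the paper bounds $|A-a(v_t^i)|$ by its supremum over the cell and then uses that the jump sums add up to $|v(t)|_{\BV(\R)}$ (since each $\tilde v_t^i$ lies between $v_t^{i-1}$ and $v_t^i$), whereas you integrate the interpolation weights (gaining a factor $\tfrac12$) and bound the sums crudely by $2|v(t)|_{\BV(\R)}$ — the two factors cancel and yield the identical constant.
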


\begin{proof}
    First, the standard estimate
    \begin{equation*}
        \begin{aligned}
            \int_{x_0^i}^{x_0^{i+1}}|u_0- v_0|\, dx & = \int_{x_0^i}^{x_0^{i+1}} \biggl|u_0(x) - \frac{1}{\dx_0^i} \int_{x_0^i}^{x_0^{i+1}} u_0(y)\, dy\biggr|\, dx \\
            & \leq \frac{1}{\dx_0^i} \int_{x_0^i}^{x_0^{i+1}}\int_{x_0^i}^{x_0^{i+1}} |u_0(x) - u_0(y)| \,dy\,dx \\
            & \leq \dx_0^i \norm{u_0}_{\BV([x_0^i, x_0^{i+1}])},
        \end{aligned}
    \end{equation*}
    where $|\cdot|_{\BV([x_0^i, x_0^{i+1}])}$ denotes the total variation on $[x_0^i, x_0^{i+1}]$, implies that
    \begin{equation*}
        \norm{v_0 - u_0}_{L^1(\R)} = R_{u_0} + \sum_{i = 1}^{N-1} \int_{x_0^i}^{x_0^{i+1}}|u_0- v_0|\, dx \leq R_{u_0} + \dx_0^* |u_0|_{\BV(\R)}.
    \end{equation*}
    It remains to estimate $\norm{A v - f(v)}_{L^1(\R \times (0, T))}$. Consider first $t \in (0, t_1)$, for which
    \begin{equation} \label{eq:interpolation_estimate}
        \int_\R |A(x, t) v(x, t) - f(v(x, t))|\, dx = \sum_{i = 1}^{N-1} v_t^i \int_{x_t^i}^{x_t^{i+1}} |A(x, t) - a(v_t^i)|\, dx.
    \end{equation}
    Since $A$ is defined as a linear interpolation, the supremum of the integrand can be bounded by
    \begin{equation*}
        \sup_{x \in (x_t^i, x_t^{i+1})}|A(x, t) - a(v_t^i)| \leq |V(v_t^{i-1}, v_t^i) - a(v_t^i)| + |V(v_t^{i-1}, v_t^i) - a(v_t^i)|.
    \end{equation*}
    As in the proof of Lemma~\ref{lemma:temporal_modulus}, let $\tilde{v}_t^i \in [v_t^{i-1}, v_t^i]$ be defined as the point at which $a$ attains the value $V(v_t^{i-1}, v_t^{i})$. Then
    \begin{equation*}
        \begin{aligned}
            \sup_{x \in (x_t^i, x_t^{i+1})}|A(x, t) - a(v_t^i)| & \leq |a(\tilde{v}_t^i) - a(v_t^i)| + |a(\tilde{v}_t^{i+1}) - a(v_t^i)| \\
            & \leq [a]_\lip \bigl(|\tilde{v}_t^i - v_t^i| + |\tilde{v}_t^{i+1} - v_t^i|\bigr).
        \end{aligned}
    \end{equation*}
    Plugging this into \eqref{eq:interpolation_estimate} gives
    \begin{equation*}
        \norm{A(t)v(t) - f(v(t))}_{L^1(\R)} \leq [a]_\lip \sum_{i = 1}^{N-1} v_t^i \dx_t^i \bigl(|\tilde{v}_t^i - v_t^i| + |\tilde{v}_t^{i+1} - v_t^i|\bigr).
    \end{equation*}
    Note that $[a]_\lip \leq [f']_\lip / 2$ and $v_t^i \dx_t^i = v_0^i \dx_0^i \leq v_0^* \dx_0^*$. This means that
    \begin{equation*}
        \begin{aligned}
            \norm{A(t)v(t) - f(v(t))}_{L^1(\R)} & \leq \frac{[f']_\lip}{2} v_0^* \dx_0^* \sum_{i = 1}^{N-1} \bigl(|\tilde{v}_t^i - v_t^i| + |\tilde{v}_t^{i+1} - v_t^i|\bigr) \\
            & \leq \frac{[f']_\lip}{2} v_0^* \dx_0^* |v_0|_{\BV(\R)} \\
            & \leq \frac{[f']_\lip}{2} \norm{u_0}_{L^\infty(\R)} \dx_0^* |u_0|_{\BV(\R)}
        \end{aligned}
    \end{equation*}
    for all $t \in (0, t_1)$. A similar argument can be made for each interval $(t_j, t_{j+1})$ between collisions. Inserting this in \eqref{eq:main_stability_result} finishes the proof.
\end{proof}

Corollary~\ref{cor:main} from the introduction follows immediately as a consequence  of Proposition \ref{prop:explicit_convergence_rate}.

\section{Conclusion and future work} \label{sec:discussion}

In this paper, we introduced and analyzed the particle path scheme \mbox{\ref{item:1}}--\mbox{\ref{item:3}}, a novel method for approximating entropy solutions of one-dimensional scalar conservation laws with non-negative initial data. The scheme is motivated by the equivalence between entropy solutions and well-posed particle paths, using a specific particle velocity derived from the Filippov particle velocity at discontinuities  of the solution.

The main contribution is the $L^1$-stability estimate presented in Theorem \ref{thm:main}, which bounds the error between the approximation $v$ and the true entropy solution $u$. This estimate directly leads to an explicit convergence rate of order $\mathcal{O}(\sqrt{\dx^*})$ under standard assumptions (Corollary \ref{cor:main}).

Furthermore, we demonstrated that for concave flux functions, the Particle Path Scheme coincides precisely with the well-known Follow-the-Leader (FtL) traffic flow model. This connection provides a new theoretical foundation for the FtL model and yields a rigorous proof of its convergence rate towards the macroscopic LWR model.

Throughout this work, we assumed non-negative initial data $u_0$, a standard assumption motivated by applications such as traffic modeling. To extend the scheme to signed initial data, one needs to handle the possible cancellation of mass where the solution changes sign. While the full particle velocity function \eqref{eq:velocityForRiemannParticle} derived in \cite[Theorem 1.6]{fjordholm_maehlen_oerke} applies to signed solutions, incorporating it into the scheme is technically more involved, and it is not immediately clear whether an analogous PDE formulation, similar to the continuity equation \eqref{eq:cont_eq} derived here, holds in this case.

A natural direction for future work is the development of fully discrete versions of the particle path scheme, obtained by applying numerical ODE solvers to the particle system \eqref{eq:particle_ode}. Although the stability and convergence of such schemes remains to be analyzed, a simple forward Euler discretization was employed to generate the numerical results shown in Figure \ref{fig:approximation_example}. Related work exists for the Follow-the-Leader model, where convergence of a fully discrete scheme was established in \cite{holden_risebro_2018}, albeit without an explicit convergence rate.

Finally, the connection between the scheme and the particle path interpretation of entropy solutions suggests an investigation of convergence of the characteristic flow. Specifically, one could study whether the flow map $X_t(s, x)$ generated by the ODE $\dot{x}_t = A(x_t, t)$ from Proposition~\ref{prop:representation} converges to the true Filippov flow associated with the entropy solution of the conservation law. Such convergence has interesting interpretations from a modelling perspective, and would provide further validation of the scheme's foundations.

\section*{Acknowledgements}
The author thanks Ulrik S.~Fjordholm for helpful comments on earlier drafts of this paper.

\bibliography{bibliography}
\bibliographystyle{abbrv}
\end{document}